\numberwithin{equation}{section}
\newtheorem{theorem}{Theorem}[section]
\newtheorem{lemma}[theorem]{Lemma}
\newtheorem{follow}[theorem]{Corollary}
\newtheorem{pr}[theorem]{Proposition}
\theoremstyle{definition}
\newcommand{\bel}{\begin{equation} \label}
\newcommand{\ee}{\end{equation}}
\newcommand{\one}{\mathds{1}}
\newcommand{\rd}{{\mathbb R}^{2}}
\newcommand{\re}{{\mathbb R}}
\newcommand{\N}{{\mathbb N}}
\newcommand{\A}{{\mathbb A}}
\newcommand{\Z}{{\mathbb Z}}
\def\beq{\begin{equation}}
\def\eeq{\end{equation}}
\newcommand{\bea}{\begin{eqnarray}}
\newcommand{\eea}{\end{eqnarray}}
\newcommand{\beas}{\begin{eqnarray*}}
\newcommand{\eeas}{\end{eqnarray*}}
\begin{document}
\title[Metric perturbations of the Landau Hamiltonian]{Local Spectral Asymptotics for Metric Perturbations of the Landau Hamiltonian}

\author[T.~Lungenstrass]{Tom\'as Lungenstrass}
\author[G.~Raikov]{Georgi Raikov}

\begin{abstract}
We consider metric perturbations of the Landau Hamiltonian. We investigate the asymptotic behaviour of the discrete spectrum of the perturbed operator near the Landau levels, for perturbations of compact support, and of exponential or power-like decay at infinity.
\end{abstract}

\maketitle

{\bf  AMS 2010 Mathematics Subject Classification:} 35P20, 35J10, 47G30, 81Q10\\

{\bf  Keywords:}
Landau Hamiltonian, metric perturbations, position-dependent mass, spectral asymptotics\\

%%%%%%%%%%%%%%%%%%%%%%%%%%%%%%%%%%%%%%%%%%%%%%%%%%%%%%%%%%%%%%%%%%%%%%%%%%
%%%%%%%%%%%%%%%%%%%%%%%%%%%%%%%%%%%%%%%%%%%%%%%%%%%%%%%%%%%%%%%%%%%%%%%%%%

\section{Introduction}
\label{s1} \setcounter{equation}{0}
Let $$H_0 : = (-i\nabla - A_0)^2,$$
with $A_0 = (A_{0,1},A_{0,2}) : = \frac{b}{2} \left(-x_2, x_1\right)$,
    be the Landau Hamiltonian, self-adjoint in $L^2(\rd)$, and essentially self-adjoint on $C_0^\infty(\rd)$. In other words, $H_0$ is the 2D Schr\"odinger operator with constant scalar magnetic field $b>0$, i.e. the Hamiltonian of a 2D spinless non relativistic quantum particle subject to a constant magnetic field.
    As is well known, the spectrum $\sigma(H_0)$ consists of infinitely degenerate eigenvalues $\Lambda_q : = b(2q+1)$, $q \in \Z_+ : = \left\{0,1,2,\ldots\right\}$, called {\em Landau levels} (see e.g. \cite{f,l}).\\
    In the present article we consider metric perturbations of $H_0$. Namely, let
    $$
    m(x) = \left\{m_{jk}(x)\right\}_{j,k=1,2}, \quad x \in \rd,
    $$
    be a Hermitian $2 \times 2$ matrix such that $m(x) \geq 0$ for all $x \in \rd$. Throughout the article we assume that $m_{jk} \in C^\infty_{\rm b}(\rd)$, $j,k=1,2$, i.e. $m_{jk} \in C^\infty(\rd)$, and $m_{jk}$ together with all its  derivatives are bounded on $\rd$.
    Set
    \bel{0}
    \Pi_j : = -i\frac{\partial}{\partial x_j} - A_{0,j}, \quad j=1,2,
    \ee
     so that $H_0 =  \Pi_1^2 + \Pi_2^2$. On ${\rm Dom}\,H_0$ define the  operators
    $$
    H_\pm : = \sum_{j,k=1,2} \Pi_j (\delta_{jk} \pm m_{jk})\Pi_k = H_0 \pm W
    $$
    where $W : = \sum_{j,k=1,2} \Pi_j  m_{jk} \Pi_k$; in the case of $H_-$, we suppose additionally that $\sup_{x \in \rd}|m(x)| <1$. Thus the matrices $g_{\pm}(x) = \left\{g_{jk}^{\pm}(x)\right\}_{j,k=1,2}$ with $g_{jk}^{\pm} : =  \delta_{jk} \pm m_{jk}$ are positive definite for each $x \in \rd$. Under these assumptions, the operators $H_\pm$ are self-adjoint in $L^2(\rd)$, and essentially self-adjoint on $C^\infty_0(\rd)$ (see the Appendix).\\
    From mathematical physics point of view, the operators $H_\pm$ are special cases of Schr\"odinger operators with {\em position-dependent mass} which have been investigated since long ago (see e.g. \cite{bfm, roos}), but the interest towards which increased essentially during the last decade (see e.g. \cite{mrr, gs, k}). Here we would like to mention especially the article \cite{sddo} where the model considered is quite close to the operators $H_\pm$ discussed in the present paper. \\
    The operators $H_\pm$ admit also a geometric interpretation since they are related to the Bochner Laplacians corresponding to connections with constant non-vanishing curvature
    (see e.g. \cite{ros, cdv}); we discuss this relation in more detail at the end of Section \ref{s2}.
     Further, assume that
    \bel{d1}
    \lim_{|x| \to \infty} m_{jk}(x) = 0, \quad j,k=1,2.
    \ee
    Thus $m$ models a localized perturbation with respect to a reference medium.
    Under condition \eqref{d1} the resolvent difference $H_\pm^{-1} - H_0^{-1}$ is a compact operator (see the Appendix), and therefore the essential spectra of $H_\pm$ and $H_0$ coincide, i.e.
    $$
    \sigma_{\rm ess}(H_\pm) =  \sigma_{\rm ess}(H_0) = \sigma(H_0) = \bigcup_{q=0}^\infty\left\{\Lambda_q\right\}.
    $$
    The spectrum $\sigma(H_\pm)$ on $\re\setminus \bigcup_{q=0}^\infty\left\{\Lambda_q\right\}$ may consist of discrete eigenvalues whose only possible accumulation points are the Landau levels. Moreover, taking into account that $W \geq 0$, and applying \cite[Theorem 7, Section 9.4]{birsol}, we find that the eigenvalues of $H_+$ (resp., $H_-$) may accumulate to a given Landau level $\Lambda_q$ only from above (resp., from below). Fix $q \in \Z_+$. Let $\left\{\lambda_{k,q}^-\right\}$ be the eigenvalues of $H_-$ lying on the interval $(\Lambda_{q-1}, \Lambda_q)$ with $\Lambda_{-1} : = -\infty$, counted with the multiplicities, and enumerated in increasing order. Similarly, let $\left\{\lambda_{k,q}^+\right\}$ be the eigenvalues of $H_+$  lying on the interval $(\Lambda_q, \Lambda_{q+1})$, counted with the  multiplicities, and enumerated in decreasing order.\\
    The aim of the article is to investigate the rate of convergence of $\lambda_{k,q}^\pm - \Lambda_q $ as $k \to \infty$, $q \in \Z_+$ being fixed, for perturbations $m$ of compact support, of exponential decay, or of power-like decay at infinity.\\
    The properties of the discrete spectrum generated by perturbative second-order differential operators with decaying coefficients have been considered also in \cite{aadh, boylev, bkrs, r1}. \\
    The article is organized as follows. In  Section \ref{s2} we formulate our main results, and briefly comment on them. In Section  \ref{ss32} we reduce our analysis to the study of operators of Berezin--Toeplitz type, and in Section \ref{ss33} we establish several useful unitary equivalences for these operators. Section \ref{ss34} contains the proofs of our results in the case of rapid decay, i.e. of compact support or exponential decay, while  the proofs for slow, i.e. power-like decay, could be found in Section \ref{ss35}. Finally, in the Appendix we address some standard issues concerning the domain  of the operators $H_\pm$, and the compactness of the resolvent difference $H_0^{-1} - H_\pm^{-1}$.

\section{Main Results}
\label{s2} \setcounter{equation}{0}
First, we formulate our results concerning perturbations $m$ of compact support. Denote by  $m_<(x)$ and  $m_>(x)$ with $m_<(x) \leq m_>(x)$, the two eigenvalues of  the matrix $m(x)$, $x \in \rd$.

    \begin{theorem} \label{th1}
    Assume that the support of the matrix $m$ is compact, and its smaller eigenvalue $m_<$ does not vanish identically. Fix $q \in \Z_+$. Then we have
    \bel{1}
    \ln{\left(\pm\left(\lambda_{k,q}^\pm - \Lambda_q\right)\right)} = -k\ln{k} + O(k), \quad k \to \infty.
    \ee
    \end{theorem}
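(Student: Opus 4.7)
My plan is to reduce the spectral asymptotics near $\Lambda_q$, via a Birman--Schwinger argument, to the eigenvalue asymptotics of a Berezin--Toeplitz operator on the $q$-th Landau subspace, and then invoke the classical rate of decay of such eigenvalues for compactly supported symbols. This follows the paradigm of \cite{r1, bkrs} for potential perturbations and is the strategy announced in Sections \ref{ss32}--\ref{ss34}.

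\textbf{Step 1 (Birman--Schwinger).} Since $W \geq 0$ is $H_0$-bounded and $H_0^{-1} - H_\pm^{-1}$ is compact, the Birman--Schwinger principle identifies eigenvalues of $H_\pm$ lying in the gaps adjacent to $\Lambda_q$ with eigenvalues of $\pm W^{1/2}(H_0 - \lambda)^{-1} W^{1/2}$ equal to $1$. Writing the resolvent via the Landau decomposition $(H_0 - \lambda)^{-1} = \sum_r p_r/(\Lambda_r - \lambda)$, where $p_r$ denotes the orthogonal projection onto the $r$-th Landau eigenspace, the term with $r = q$ dominates as $\lambda \to \Lambda_q$, while the remaining terms stay uniformly bounded. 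A standard Weyl majorization then yields
\[
\#\{k\,:\,\pm(\lambda^{\pm}_{k,q} - \Lambda_q) > \mu\} = n_+(\mu;\, p_q W p_q)(1 + o(1)), \qquad \mu \downarrow 0,
\]
where $n_+(\mu; T)$ counts the eigenvalues of a compact non-negative $T$ exceeding $\mu$. Consequently \eqref{1} reduces to proving $\ln \mu_k(p_q W p_q) = -k\ln k + O(k)$, with $\mu_k$ the $k$-th eigenvalue of $p_q W p_q$ in decreasing order.

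\textbf{Step 2 (Reduction to a scalar Berezin--Toeplitz operator).} Introduce creation/annihilation operators via $\Pi_1 = \sqrt{b/2}(a + a^*)$, $\Pi_2 = -i\sqrt{b/2}(a - a^*)$, satisfying $[a, a^*] = 1$, $a p_q = p_{q-1} a p_q$, and $a^* p_q = p_{q+1} a^* p_q$. Substituting into $W = \sum_{j,k} \Pi_j m_{jk} \Pi_k$ and projecting to $p_q L^2(\rd)$ collapses the expansion to a finite sum of terms of the form $p_q a^\# p_{q'} m_{jk} p_{q''} a^\# p_q$ with $q', q'' \in \{q \pm 1\}$. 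The canonical unitary identifications $p_{q'} L^2(\rd) \simeq p_0 L^2(\rd)$ coming from the ladder algebra (Section \ref{ss33}) allow one to rewrite each piece as a Berezin--Toeplitz operator on $p_0 L^2(\rd)$ with a compactly supported scalar symbol built from the entries of $m$. The hypothesis $m_< \not\equiv 0$ combined with continuity produces an open set $K \subset \supp m$ on which $m_<(x) \geq c_0 > 0$, and thus yields a two-sided sandwich $c_1\, p_0 \chi_{K_1} p_0 \leq p_q W p_q \leq c_2\, p_0 \chi_{K_2} p_0$ for compact sets $K_1 \subset K_2$ with non-empty interior.

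\textbf{Step 3 (Compactly-supported Toeplitz asymptotics and main obstacle).} It is classical that for any non-negative, compactly supported, not identically zero $V \in L^\infty(\rd)$,
\[
\ln \mu_k(p_0 V p_0) = -k \ln k + O(k), \qquad k \to \infty,
\]
arising from the identification of $p_0 L^2(\rd)$ with the Bargmann--Fock space of entire functions weighted by a Gaussian and the analysis of the resulting Toeplitz operator (see \cite{r1} and references therein); the $O(k)$ term absorbs the geometric dependence on $\supp V$. Combining this with the sandwich in Step 2 gives the claimed asymptotics for $\mu_k(p_q W p_q)$, and via Step 1 completes the proof. The main technical hurdle, peculiar to the \emph{metric} setting (as opposed to the potential perturbations of \cite{r1, bkrs}), lies in Step 2: tracking the off-diagonal contributions $p_q \Pi_j p_{q \pm 1}$ and passing from the matrix inequality $m_<(x) I \leq m(x)$ to a clean pointwise scalar lower bound on the effective symbol. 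This requires careful commutator manipulations with the $\Pi_j$'s together with the local positivity of $m_<$, so as to ensure that the latter propagates to non-degeneracy of the Toeplitz symbol rather than being cancelled by lower-order terms generated by rearranging the non-commuting momenta.
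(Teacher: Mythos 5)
Your overall architecture (Birman--Schwinger reduction to $P_qWP_q$, then comparison with Toeplitz operators with compactly supported symbols, then the classical $-k\ln k+O(k)$ decay) is the paper's, but two of your steps contain genuine gaps. In Step 1 you apply the Birman--Schwinger principle directly to the pair $(H_0,H_\pm)$ with the operator $W^{1/2}(H_0-\lambda)^{-1}W^{1/2}$. Because $W$ is a \emph{second-order} perturbation, this operator is bounded but not compact: e.g.\ for a cutoff $\chi$ with compact support one has $\chi\, a(H_0-\lambda)^{-1}a^*\chi=\chi^2+(\lambda-b)\,\chi(H_0+2b-\lambda)^{-1}\chi$, and multiplication by $\chi^2$ is not compact in $L^2(\rd)$. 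Consequently the "non-resonant" part $W^{1/2}(H_0-\lambda)^{-1}(I-P_q)W^{1/2}$ is not compact either, its essential spectrum does not shrink as $\lambda\to\Lambda_q$, and $n_\pm(\varepsilon;\cdot)$ of that part is infinite for small $\varepsilon$; so "the remaining terms stay uniformly bounded" plus a Weyl majorization does not yield your asymptotic identity. This is exactly the obstruction the paper points out, and its remedy is to apply the \emph{generalized} Birman--Schwinger principle to the resolvent pair $(H_0^{-1},H_\pm^{-1})$, where $T_\pm=\pm(H_\pm^{-1}-H_0^{-1})$ \emph{is} compact; this produces the extra term $P_qWH_\pm^{-1}WP_q$, which is then absorbed using $0\le P_qWH_\pm^{-1}WP_q\le c_\pm P_qWP_q$ (Proposition \ref{pr1} and Corollary \ref{f1}).

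In Steps 2--3 the decisive estimate, the two-sided sandwich $c_1P_0\one_{K_1}P_0\le \,\cdot\,\le c_2P_0\one_{K_2}P_0$ after unitary identification, is asserted rather than proved -- and you yourself flag (without resolving) the cancellation problem that makes the lower bound delicate. Moreover, the reduction to the \emph{lowest} Landau level on both sides is not what a direct ladder computation gives. What comes cheaply is the lower bound $P_q\A^*m_<\A P_q\ge P_q\,a\,m_<\,a^*P_q$, which is unitarily equivalent to $2b(q+1)P_{q+1}m_<P_{q+1}$ -- level $q+1$, not $0$ -- and after minorizing $m_<\ge c_0\one_{B_{R_<}(x_0)}$ and applying a magnetic translation one needs the eigenvalue asymptotics of $P_m\one_{B_R(0)}P_m$ at an \emph{arbitrary} Landau level $m$ (Lemma \ref{l1}, from \cite{chkr}), not only the lowest-level statement you quote. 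For the upper bound, converting $P_q\A^*m_>\A P_q$ into a single Toeplitz operator with compactly supported symbol requires the symbol calculus of Proposition \ref{pr2} (Laguerre polynomials of $-\Delta/2b$ and $\partial^2/\partial\bar z^2$ acting on the entries) or an explicit substitute, e.g.\ Ky Fan-type inequalities for the two diagonal pieces $P_qa^*m_>aP_q$ and $P_qa\,m_>a^*P_q$. Once these two ingredients are supplied, the final step (taking logarithms and using $\ln k!=k\ln k+O(k)$) goes through as you indicate.
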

    {\em Remarks}: (i) Under additional technical hypotheses on $m_{\gtrless}$, we could make asymptotic relation \eqref{1} more precise.  Namely, assume
    that there exists a non increasing sequence $\left\{s_j\right\}_{j \in {\mathbb N}}$, such that $s_j >  0$, $j \in {\mathbb N}$, $\lim_{j \to \infty} s_j = 0$, and the level lines
    $$
    \left\{x \in \rd \, | \, m_<(x) = s_j\right\}, \quad j \in {\mathbb N},
    $$
    are bounded Lipschitz curves. In particular, the existence of such sequence follows from the Sard lemma (see e.g. \cite[Theorem 3.1, Chapter 2]{shst}) if we assume that $m_< \in C^2(\rd)$.
    Further, denote by ${\mathcal C}_{\gtrless}$ the {\em logarithmic capacities} (see e.g. \cite[Section 4, Chapter II]{land}) of ${\rm supp}\,m_\gtrless$. Then we have
\begin{multline}\label{2}
    \left(1 + \ln{\left(b{\mathcal C}_<^2/2\right)}\right)k + o(k) \leq\\
    \ln{\left(\pm\left(\lambda_{k,q}^\pm - \Lambda_q\right)\right)} + k\ln{k} \leq
    \left(1 + \ln{\left(b{\mathcal C}_>^2/2\right)}\right)k + o(k)
\end{multline}
    as $k \to \infty$. We omit the details of the proof of \eqref{2}, inspired by \cite{fp}.\\
    (ii) For $q \in \Z_+$ and $\lambda > 0$, set
    \bel{8}
    {\mathcal N}_q^{\pm}(\lambda) : = \#\left\{k \in \Z_+ \, | \, \pm\left(\lambda_{k,q}^\pm - \Lambda_q\right) > \lambda\right\}.
    \ee
    Then a less precise version of \eqref{1}, namely
    $$
    \ln{\left(\pm\left(\lambda_{k,q}^\pm - \Lambda_q\right)\right)} = -k\ln{k}\,(1 + o(1)), \quad k \to \infty,
    $$
    is equivalent to
    \bel{7}
     {\mathcal N}_q^{\pm}(\lambda) = \frac{|\ln{\lambda}|}{\ln{|\ln{\lambda}|}}(1 + o(1)), \quad \lambda \downarrow 0.
     \ee

     Further, we state our results concerning perturbations of exponential decay. Assume that there exist constants $\beta>0$ and $\gamma > 0$ such that
     \bel{3}
     \ln{m_{\gtrless}(x)} = - \gamma |x|^{2\beta} + O(\ln{|x|}), \quad |x| \to \infty.
     \ee
     {\em Remark}: In \eqref{3}, we suppose that the values of $\gamma$ and $\beta$ are the same for $m_<$ and $m_>$. Of course, the remainder
     $O(\ln{|x|})$ could be different for $m_<$ and $m_>$. \\

     Given $\beta>0$ and $\gamma > 0$, set $\mu : = \gamma(2/b)^\beta$, $b>0$ being the constant magnetic field.

     \begin{theorem} \label{th2}
     Let $m_{\gtrless}$ satisfy \eqref{3}. Fix $q \in \Z_+$.\\
     {\rm (i)} If $\beta \in (0,1)$, then there exist constants $f_j = f_j(\beta, \mu)$, $j \in \N$, with $f_1 = \mu$, such that
     \bel{4}
     \ln{\left(\pm\left(\lambda_{k,q}^\pm - \Lambda_q\right)\right)} = - \sum_{1 \leq j < \frac{1}{1-\beta}} f_j k^{(\beta-1)j + 1} + O(\ln{k}), \quad k \to \infty.
     \ee
     {\rm (ii)} If $\beta = 1$, then
     \bel{5}
     \ln{\left(\pm\left(\lambda_{k,q}^\pm - \Lambda_q\right)\right)} = - \left(\ln{(1+\mu)}\right) k + O(\ln{k}), \quad k \to \infty.
     \ee
     {\rm (iii)} If $\beta \in (1,\infty)$, then there exist constants $g_j = g_j(\beta, \mu)$, $j \in \N$,  such that
\begin{multline}\label{6}
     \ln{\left(\pm\left(\lambda_{k,q}^\pm - \Lambda_q\right)\right)} =
    - \frac{\beta - 1}{\beta} k \ln{k} \\+ \left(\frac{\beta - 1 - \ln{(\mu\beta)}}{\beta}\right) k  - \sum_{1 \leq j < \frac{\beta}{\beta-1}} g_j k^{(\frac{1}{\beta}-1)j + 1} + O(\ln{k}), \quad k \to \infty.
\end{multline}
      \end{theorem}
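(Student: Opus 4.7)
The plan is to use the Berezin--Toeplitz reduction established in Section \ref{ss32}, according to which the asymptotics of $\lambda_{k,q}^\pm-\Lambda_q$ are governed by those of the eigenvalues of an operator of the form $p_q V p_q$, where $V$ is a scalar symbol inheriting the decay properties of $m_\gtrless$. By the unitary equivalences of Section \ref{ss33}, I reduce to $q=0$ and work in the Fock--Bargmann realization of ${\rm Ran}\,p_0$. Using the pointwise matrix inequalities $m_<(x)I\le m(x)\le m_>(x)I$ and the Birman--Schwinger style two-sided bounds that the reduction provides, I sandwich $p_0 V p_0$ between Toeplitz operators with radial symbols of the form $V_\pm(x)=e^{-\gamma|x|^{2\beta}+O(\ln|x|)}$ dictated by \eqref{3}, so that upper and lower bounds on the relevant eigenvalues reduce to a single radial moment analysis.

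For a radial symbol $V(x)=f(|x|^2)$, the operator $p_0 V p_0$ is diagonal in the standard orthonormal basis of the Fock--Bargmann space, and after the substitution $t=b|x|^2/2$ its $k$-th eigenvalue is
\[
\nu_k=\frac{1}{k!}\int_0^\infty f(2t/b)\,t^k\,e^{-t}\,dt.
\]
With $f(2t/b)=e^{-\mu t^\beta}$, where $\mu=\gamma(2/b)^\beta$, this becomes a Laplace-type integral with phase $\Phi_k(t)=k\ln t-t-\mu t^\beta$, whose critical point $t_*$ solves $k=t_*+\mu\beta\,t_*^\beta$.

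The three cases of the theorem correspond to the three asymptotic balances in this saddle-point equation. If $\beta=1$, then $t_*=k/(1+\mu)$ exactly, and the leading-order Laplace estimate combined with Stirling's formula $\ln k!=k\ln k-k+O(\ln k)$ yields \eqref{5}. If $\beta<1$, the linear term dominates and $t_*=k-\mu\beta\,k^\beta+\cdots$; iterating the equation and substituting into $\Phi_k(t_*)-\ln k!$ produces a finite sum of terms of orders $k^{(\beta-1)j+1}$ with $f_1=\mu$, truncating as soon as $(\beta-1)j+1\le 0$, which gives \eqref{4}. If $\beta>1$, the exponential term dominates and $t_*\sim (k/(\mu\beta))^{1/\beta}$; the analogous iterative expansion, together with the saddle-point evaluation, produces the leading term $-\tfrac{\beta-1}{\beta}k\ln k$, the linear term $\tfrac{\beta-1-\ln(\mu\beta)}{\beta}k$, and the finite series of subleading powers $k^{(1/\beta-1)j+1}$ appearing in \eqref{6}.

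The main analytical obstacle is to control the reduction from the matrix-valued $m$ to a purely radial Toeplitz symbol without losing precision in the exponential rate. Since \eqref{3} only prescribes the radial envelopes of $m_\gtrless$, one must enclose them between two radial exponentials differing by factors of polynomial growth arising from the remainder $O(\ln|x|)$. Such polynomial prefactors shift the moment integrals by factors of order $k^{O(1)}$, which contribute only $O(\ln k)$ to $\ln\nu_k$ and are therefore absorbed in the error terms of \eqref{4}--\eqref{6}. Once this reduction is in place, the precise computation of the coefficients $f_j$ and $g_j$ is a routine matter of tracking the iterative expansion of $t_*$ and the Gaussian correction to the Laplace integral.
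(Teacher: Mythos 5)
Your proposal takes essentially the same route as the paper: the Birman--Schwinger/Toeplitz reduction of Sections \ref{ss32}--\ref{ss33}, sandwiching of $m$ between radial exponential envelopes, the explicit diagonal moment formula \eqref{53a} for radial symbols on $P_0L^2(\rd)$, and a Laplace/saddle-point analysis of $\frac{1}{k!}\int_0^\infty e^{-\mu t^\beta-t}t^k\,dt$ in the three regimes $\beta<1$, $\beta=1$, $\beta>1$, with polynomial prefactors absorbed into the $O(\ln k)$ errors. The only details the paper treats explicitly that you pass over are routine: the reduction to $q=0$ applies Laguerre-type differential operators to the symbol (producing extra $|x|^{2(q+1)(2\beta-1)}$ factors when $\beta>1/2$, again only polynomial), and one needs the eventual monotonicity of the moment sequence (Lemma \ref{l2}(ii)) to identify the $k$-th moment with the $k$-th eigenvalue in decreasing order.
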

      {\em Remarks}: (i) Let us describe explicitly  the coefficients $f_j$ and $g_j$, $j \in \N$, appearing in \eqref{4} and \eqref{6} respectively.
      Assume at first $\beta \in (0,1)$. For $s>0$ and $\epsilon \in \re$, $|\epsilon| << 1$, introduce the function
      \bel{o1}
      F(s;\epsilon) : = s-\ln{s} + \epsilon \mu  s^\beta.
      \ee
      Denote by $s_<(\epsilon)$ the unique positive solution of the equation $s = 1 - \epsilon \beta \mu s^\beta$, so that $\frac{\partial F}{\partial s}(s_<(\epsilon); \epsilon) = 0$. Set
      \bel{o4}
      f(\epsilon) : = F(s_<(\epsilon); \epsilon).
      \ee
      Note that $f$ is a real analytic function for small $|\epsilon|$. Then $f_j: = \frac{1}{j!} \frac{d^j f}{d\epsilon^j}(0)$, $j \in \N$. \\
      Let now $\beta \in (1,\infty)$. For $s>0$ and $\epsilon \in \re$, $|\epsilon| << 1$, introduce the function
      \bel{o9}
      G(s;\epsilon) : = \mu s^\beta -\ln{s} + \epsilon s.
      \ee
      Denote by $s_>(\epsilon)$ the unique positive solution of the equation $\beta \mu s^\beta = 1 - \epsilon s$ so that $\frac{\partial G}{\partial s}(s_>(\epsilon); \epsilon) = 0$. Define
      \bel{o10}
      g(\epsilon) : = G(s_>(\epsilon); \epsilon),
      \ee
      which is a real analytic function for small $|\epsilon|$. Then $g_j: = \frac{1}{j!} \frac{d^j g}{d\epsilon^j}(0)$, $j \in \N$.\\
      (ii) If, instead of \eqref{3}, we assume that
      \bel{j1a}
     \ln{m_{\gtrless}(x)} = - \gamma |x|^{2\beta}(1 + o(1)), \quad |x| \to \infty,
     \ee
     then we can prove less precise versions of \eqref{4}, \eqref{5}, and \eqref{6}, namely
     $$
      \ln{\left(\pm\left(\lambda_{k,q}^\pm - \Lambda_q\right)\right)} =
\begin{cases}
      -\mu k^{\beta} (1 + o(1)) \quad {\rm if} \quad \beta \in (0,1),\\
      - \left(\ln{(1+\mu)}\right) \, k (1 + o(1)) \quad {\rm if} \quad \beta = 1,\\
      - \frac{\beta - 1}{\beta} k \ln{k} \, (1 + o(1))\quad {\rm if} \quad \beta \in (1,\infty),
\end{cases}
      \ k \to \infty,
      $$
      which are equivalent to
      \bel{o61}
      {\mathcal N}_q^\pm(\lambda) =
\begin{cases}
      \mu^{-1/\beta} |\ln{\lambda}|^{1/\beta} (1 + o(1)) \quad {\rm if} \quad \beta \in (0,1),\vspace{3pt}\\
      \frac{1}{\ln{(1+\mu)}} |\ln{\lambda}| (1 + o(1)) \quad {\rm if} \quad \beta = 1,\vspace{4pt}\\
      \frac{\beta}{\beta-1} \frac{|\ln{\lambda}|}{\ln{|\ln{\lambda}|}}(1 + o(1))\quad {\rm if} \quad \beta \in (1,\infty),
\end{cases}
      \ \lambda \downarrow 0.
      \ee
      Note that in \eqref{j1a}, similarly to \eqref{3}, we assume that the values of $\gamma$ and $\beta$ are the same for $m_<$ and $m_>$. However, since the coefficient in \eqref{o61} with $\beta > 1$ does not depend on $\gamma$, in this case we could assume different values of $\gamma > 0$ for
      $m_<$ and $m_>$. \\

      Finally, we consider perturbations $m$ which admit a power-like decay at infinity. For $\rho > 0$ recall the definition of the H\"ormander class
      $$
      {\mathcal S}^{-\rho}(\rd) : = \left\{\psi \in C^\infty(\rd)\, | \, |D^\alpha \psi(x)| \leq c_\alpha \langle x\rangle^{-\rho -|\alpha|}, \; x \in \rd, \; \alpha \in \Z_+^2\right\},
      $$
      where $\langle x\rangle : = (1 + |x|^2)^{1/2}$, $x \in \rd$. Let $\psi : \rd \to \re$ satisfy $\lim_{|x| \to \infty} \psi(x) = 0$. Set
      \bel{o60}
      \Phi_\psi(\lambda) : = \left|\left\{x \in \rd \, | \, \psi(x) > \lambda\right\}\right|, \quad \lambda > 0,
      \ee
      where $|\cdot |$ denotes the Lebesgue measure. Fix $q \in \Z_+$, and introduce the function
      \bel{o24}
      {\mathcal T}_q(x) : =
      \frac{1}{2} \left(\Lambda_q {\rm Tr}\,m(x) -  2b\, {\rm Im}\,m_{12}(x)\right),
      \quad x \in \rd.
      \ee
      Note that ${\mathcal T}_q(x) \geq 0$ for any $x \in \rd$ and $q \in \Z_+$.

      \begin{theorem} \label{th3}
      Let $m_{jk} \in {\mathcal S}^{-\rho}(\rd)$, $j,k =1,2$, with $\rho > 0$. Fix $q \in \Z_+$. Suppose that there exists a function $0 < \tau_q \in C^\infty({\mathbb S}^1)$, such that
      $$
      \lim_{|x| \to \infty} |x|^{\rho} {\mathcal T}_q(x) = \tau_q(x/|x|).
      $$
       Then we have
      \bel{10}
      {\mathcal N}_q^\pm(\lambda) = \frac{b}{2\pi} \Phi_{{\mathcal T}_q}(\lambda)(1 + o(1)) \asymp \lambda^{-2/\rho}, \quad \lambda \downarrow 0,
      \ee
      which is equivalent to
      \bel{11}
      \lim_{\lambda \downarrow 0} \lambda^{2/\rho} {\mathcal N}_q^\pm(\lambda) = {\mathcal C}_q : = \frac{b}{4\pi} \int_0^{2\pi} \tau_q(\cos{\theta}, \sin{\theta})^{2/\rho} d\theta,
      \ee
      or to
      \bel{j1}
       \pm\left(\lambda_{k,q}^\pm - \Lambda_q\right) = {\mathcal C}_q^{\rho/2} k^{-\rho/2}(1 + o(1)), \quad k \to \infty.
       \ee
      \end{theorem}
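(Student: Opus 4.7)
The plan is to exploit the reductions established in Sections \ref{ss32} and \ref{ss33}, which replace the study of the discrete eigenvalues of $H_\pm$ near $\Lambda_q$ by that of a Berezin--Toeplitz operator on the range of the orthogonal projection $p_q$ onto $\ker(H_0-\Lambda_q)$. The first step is to show, via the Birman--Schwinger type reduction already spelled out in Section \ref{ss32}, that
$$\mathcal{N}_q^\pm(\lambda) = n_+(\lambda;\, p_q W p_q)\,(1+o(1)), \quad \lambda \downarrow 0,$$
with the errors coming from off-Landau-level resolvent contributions being of bounded order, hence absorbed into the $o(1)$ thanks to the expected polynomial growth $\lambda^{-2/\rho}$.

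The second, and main, step is the identification of the effective Toeplitz symbol. Writing $\Pi_1,\Pi_2$ in terms of the ladder operators $a,a^*$ that diagonalize $H_0$, expand $W=\sum_{j,k}\Pi_j m_{jk}\Pi_k$; the products split into blocks $p_{q+s}(\cdot)p_q$ with $s\in\{-2,-1,0,1,2\}$, only the $s=0$ block surviving in $p_q W p_q$. Commuting ladder operators through the smooth factors $m_{jk}$ produces multiplication by an effective symbol plus remainders of strictly faster power decay (each derivative of $m_{jk}$ gains an extra factor $\langle x\rangle^{-1}$, since $m_{jk}\in\mathcal{S}^{-\rho}$). A short computation identifies the leading symbol with precisely $\mathcal{T}_q(x)$ of \eqref{o24}, thereby explaining the combination $\Lambda_q\,\mathrm{Tr}\,m-2b\,\mathrm{Im}\,m_{12}$. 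Substituting into the Weyl-type Berezin--Toeplitz asymptotic
$$n_+(\lambda;\, p_q V p_q) = \frac{b}{2\pi}\Phi_V(\lambda)(1+o(1)), \quad \lambda \downarrow 0,$$
valid for any $V\in\mathcal{S}^{-\rho}$ admitting a positive smooth homogeneous limit $|x|^\rho V(x)\to\tau(x/|x|)$, then yields the first equality in \eqref{10}. This is where I expect the main obstacle to lie: the bookkeeping of commutators between ladder operators and smooth multiplications, together with the quantitative argument that the off-diagonal blocks $p_{q+s}W p_q$ ($s\neq 0$) enter only into the remainder, is what selects the specific form of $\mathcal{T}_q$, and is the only place where the detailed second-order structure of the metric perturbation is genuinely used.

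The $\asymp \lambda^{-2/\rho}$ comparison in \eqref{10} then follows by a polar-coordinates computation: the homogeneity of the leading behavior of $\mathcal{T}_q$ gives
$$\Phi_{\mathcal{T}_q}(\lambda)\sim\frac{1}{2}\int_0^{2\pi}\tau_q(\cos\theta,\sin\theta)^{2/\rho}\,d\theta\,\cdot\,\lambda^{-2/\rho}, \quad \lambda\downarrow 0,$$
which matches the constant $\mathcal{C}_q$ appearing in \eqref{11}. The equivalence with \eqref{j1} is the routine inversion of a regularly varying counting function.
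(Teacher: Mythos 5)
Your plan is essentially the paper's own proof: Proposition \ref{pr1} reduces ${\mathcal N}_q^\pm(\lambda)$ to the counting function of $P_qWP_q$ modulo the quadratic term $P_qWH_\pm^{-1}WP_q$; Proposition \ref{pr2} carries out exactly the ladder-operator bookkeeping you anticipate, showing (via exact Laguerre-polynomial identities rather than iterated commutators) that $P_qWP_q$ is unitarily equivalent to $\tfrac12 P_0\,w_q(U)\,P_0$ with $\tfrac12 w_q(U)={\mathcal T}_q+\tilde{\mathcal T}_q$ and $\tilde{\mathcal T}_q\in{\mathcal S}^{-\rho-2}(\rd)$, so the cross (level-shifting) terms indeed enter only through faster-decaying symbols; and Propositions \ref{pr4}--\ref{pr5} supply the Weyl-type asymptotics for the resulting Toeplitz/anti-Wick operator, followed by the same polar-coordinate computation and the routine inversion for the equivalence of \eqref{10}, \eqref{11}, \eqref{j1}. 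The one point you state too loosely is the first reduction: the term $P_qWH_\pm^{-1}WP_q$ is not an error "of bounded order" but a compact operator that must be shown negligible, which the paper does by dominating it by $C\,P_q\A^*\langle\cdot\rangle^{-2\rho}\A P_q$, whose counting function is $O(\lambda^{-1/\rho})=o(\lambda^{-2/\rho})$.
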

    {\em Remarks}: (i) Relation \eqref{10} could be regarded as a semiclassical one, although here the semiclassical interpretation is somewhat implicit. In Propositions \ref{pr2} and \ref{pr4} below, we show that the effective Hamiltonian which governs the asymptotics of ${\mathcal N}_q^\pm(\lambda)$ as $\lambda \downarrow 0$ is a pseudo-differential operator ($\Psi$DO) with anti-Wick symbol $w_{q, b} : = w_q \circ {\mathcal R}_b$, defined by \eqref{63} and \eqref{ggg}. Under the assumptions of Theorem \ref{th3}, ${\mathcal T}_{q,b} : =  {\mathcal T}_{q} \circ {\mathcal R}_b$ (see \eqref{o24} and \eqref{ggg}) can be considered as the principal part of the symbol $w_{q, b}$, while the difference between the anti-Wick and the Weyl quantization is negligible. Then
    $\frac{1}{2\pi} \Phi_{{\mathcal T}_{q,b}}(\lambda) = \frac{b}{2\pi} \Phi_{{\mathcal T}_q}(\lambda)$ is just the main semiclassical asymptotic term  for the eigenvalue counting function for a compact $\Psi$DO with Weyl symbol ${\mathcal T}_{q,b}$. \\
    (ii) There exists an extensive family of alternative sets of assumptions for Theorem \ref{th3} (see e.g. \cite{ivrii, daurob}). We have chosen here hypotheses which, for certain, are not the most general ones, but are quite explicit and, hopefully, easy to absorb.\\

Let us comment briefly on our results. Nowadays, there exists a relatively wide literature on the local spectral asymptotics for various magnetic quantum Hamiltonians. Let us concentrate here on three types of perturbations of $H_0$ which are considered to be of a particular  interest (see e.g. \cite{ivrii, mao}):
       \begin{itemize}
       \item
       Electric perturbations
       $H_0 + Q$
       where $Q : \rd \to \re $  plays the role of the perturbative {\em electric potential};
       \item
       Magnetic perturbations
       $(-i\nabla - A_0 - A)^2$
       where $A = (A_1, A_2)$, and $B : = \frac{\partial A_2}{\partial x_1} - \frac{\partial A_1}{\partial x_2}$  is  the  perturbative {\em magnetic field};
       \item
       Metric perturbations
       $ \sum_{j,k = 1,2} \Pi_j \left(\delta_{jk} + m_{jk} \right)\Pi_k$
       where $m = \left\{m_{jk}\right\}_{j,k=1,2}$ is  an appropriate perturbative matrix-valued function.
       \end{itemize}
       Typically, the perturbations $Q$, $B$, or $m$ are supposed to decay in a suitable sense at infinity.
       Slowly decaying $Q$, e.g. $Q \in  {\mathcal S}^{-\rho}(\rd)$ with $\rho > 0$ were considered in \cite{r0}, and the main asymptotic terms of the corresponding counting functions ${\mathcal N}_q^\pm(\lambda)$ as $\lambda \downarrow 0$ were found, utilizing, in particular, anti-Wick $\Psi$DOs  .
       In \cite[Theorem 11.3.17]{ivrii},  the case of {\em combined} electric, magnetic, and metric slowly decaying perturbations was investigated,  the main asymptotic terms of ${\mathcal N}_q^\pm(\lambda)$ as $\lambda \downarrow 0$, as well as certain remainder estimates were obtained. The semiclassical microlocal analysis applied in \cite{ivrii} imposed restrictions on the symbols involved which, in some sense or another, had to decay at infinity less rapidly than their derivatives. These restrictions did not allow  to handle some rapidly decaying perturbations, e.g. those of compact support, or of exponential decay with $\beta \geq 1/2$ (see \eqref{3}). \\
       In \cite{rw} the authors used a different approach based on the spectral analysis of Berezin--Toeplitz operators and obtained the main asymptotic terms of ${\mathcal N}_q^\pm(\lambda)$ as $\lambda \downarrow 0$ in the case of potential perturbations $Q$ of exponential decay or of compact support. In particular, in \cite{rw} formulas of type \eqref{7} or \eqref{o61} appeared for the first time. In the present article, we essentially improve the methods developed in \cite{rw}. These improvements lead also to more precise results for certain rapidly decaying electric perturbations. Namely, assume that $Q \geq 0$ admits a decay at infinity which is compatible in a suitable sense with the decay of $m$. Then the results of the article extend quite easily to operators of the form
       \bel{d2}
       H_\pm \pm Q,
       \ee
       so that $H_\pm \pm Q$ are perturbations of $H_0$ having a definite sign. We do not include these generalizations just in order to avoid an unreasonable increase of the size of the article due to  results which do not require any really new arguments. \\
       Combined perturbations of $H_0$ by compactly supported $B$ and $Q$ were considered in \cite{roztas} where  the main asymptotic terms of
       ${\mathcal N}_q^\pm(\lambda)$ as $\lambda \downarrow 0$ were found. Note that the magnetic perturbations of $H_0$ are never of fixed sign which creates specific difficulties, successfully overcome in \cite{roztas}. \\
       To authors' best knowledge, no results on the spectral asymptotics for rapidly decaying {\em metric} perturbations of $H_0$ appeared before in the literature. We also included in the article  our result on slowly-decaying metric perturbations (see Theorem \ref{th3}) since it is coherent with the unified approach of the article, and is proved by methods quite different from those in \cite{ivrii}.\\
       Finally, let us discuss briefly the relation of $H_\pm$ to the Bochner Laplacians. Assume that the elements of $m$ are real. In $\rd$ introduce a Riemannian metric generated by the inverse of $g^\pm$, and the connection 1-form $\sum_{j=1,2}A_{0,j} dx_j$. Set $\gamma_\pm : = \left({\rm det}\,g^\pm\right)^{-1/2}$. Then the standard Bochner Laplacian, self-adjoint in $L^2(\rd; \gamma_\pm dx)$, is written in local coordinates as
       $$
       {\mathcal L}_\pm : = - \gamma_\pm^{-1} \sum_{j,k = 1,2} \Pi_j g_{jk}^\pm  \gamma_\pm \Pi_k .
       $$
       Let $U_\pm : L^2(\rd; \gamma_\pm dx) \to L^2(\rd; dx)$ be the unitary operator defined by $U_\pm f : = \gamma_\pm^{-1/2}f$. Then we have
       \bel{d3}
       U_\pm {\mathcal L}_\pm  U_\pm^* = H_\pm + Q_\pm
       \ee
       where
       $$
       Q_\pm : = \frac{1}{4} \sum_{j,k=1,2}\left( g_{jk}^\pm  \frac{\partial \, \ln{\gamma_\pm}}{\partial x_k} \, \frac{\partial \, \ln{\gamma_\pm}}{\partial x_j}  - 2 \frac{\partial}{\partial x_j} \left(g_{jk}^\pm  \frac{\partial \, \ln{\gamma_\pm}}{\partial x_k} \right)\right).
       $$
       Generally speaking, the functions $Q_\pm $ do not have a definite sign coinciding with the sign of the operators $H_\pm - H_0$; hence, the operators on the r.h.s of \eqref{d3} are not exactly of the form of \eqref{d2}. The fact that the symbol of a Toeplitz operator does not have a definite sign may cause considerable difficulties in the study of the spectral asymptotics of this operator if the symbol decays rapidly and, in particular, when its support is compact (see e.g. \cite{pr}). Hopefully, we will overcome these difficulties in a future work where we would consider the local spectral asymptotics of ${\mathcal L}_\pm$.

%%%%%%%%%%%%%%%%%%%%%%%%%%%%%%%%%%%%%%%%%%%%%%%%%%%%%%%%%%%%%%%%%%%%%%%%%%
%%%%%%%%%%%%%%%%%%%%%%%%%%%%%%%%%%%%%%%%%%%%%%%%%%%%%%%%%%%%%%%%%%%%%%%%%%
\section{Reduction to Berezin-Toeplitz Operators}
\label{ss32}
\setcounter{equation}{0}
In this section we reduce the analysis of the functions ${\mathcal N}_q^\pm(\lambda)$ as $\lambda \downarrow 0$ to the spectral asymptotics for certain compact operators of Berezin-Toeplitz type. To this end, we will need some more notations, and several auxiliary results from the abstract theory of compact operators in Hilbert space.\\
In what follows, we denote by $\one_M$ the characteristic function of the set $M$.
Let $T$ be a self-adjoint operator in a Hilbert space\footnote{All the Hilbert spaces considered in the article are supposed to be separable.}, and ${\mathcal I} \subset \re$ be an interval. Set
$$
N_{\mathcal I}(T) : = {\rm rank}\,\one_{\mathcal I}(T),
$$
where, in accordance with our general notations, $\one_{\mathcal I}(T)$ is the spectral projection of $T$ corresponding to ${\mathcal I}$.
Thus, if ${\mathcal I}\cap \sigma_{\rm ess}(T) = \emptyset$, then $N_{\mathcal I}(T)$ is just the number of the eigenvalues of $T$, lying on ${\mathcal I}$, and counted with their multiplicities.
In particular,
     \bel{12}
    {\mathcal N}_q^-(\lambda) = N_{(\Lambda_{q-1}, \Lambda_q-\lambda)}(H_-), \quad q \in \Z_+, \quad \lambda \in  (0, 2b),
    \ee
    \bel{14}
    {\mathcal N}_q^+(\lambda) = N_{(\Lambda_{q} + \lambda, \Lambda_{q+1})}(H_+), \quad q \in \Z_+, \quad \lambda \in  (0, 2b),
    \ee
    the functions ${\mathcal N}_q^\pm$ being defined in \eqref{8}.
    Let $T=T^*$ be a linear compact operator in a Hilbert space. For $s>0$ set
    $$
    n_\pm(s; T) : = N_{(s,\infty)}(\pm T);
    $$
    thus, $n_+(s;T)$ (resp., $n_-(s;T)$) is just the number of the eigenvalues of the operator $T$ larger than $s$ (resp., smaller than $-s$), counted with their multiplicities. If $T_j = T_j^*$, $j=1,2$, are two linear compact operators, acting in a given Hilbert space, then the Weyl inequalities
    \bel{wi}
    n_{\pm}(s_1+s_2; T_1+T_2) \leq n_\pm(s_1; T_1) + n_\pm(s_2; T_2)
    \ee
    hold for $s_j > 0$ (see e.g. \cite[Theorem 9, Section 9.2]{birsol}).

%%%%%%%%%%%%%%%%%%%%%%%%%%%%%%%%%%%%%%%%%%%%%%%%%%%%%%%%%%%%%%%%%%%%%%%%%%%%%%
Fix $q \in \Z_+$ and denote by $P_q$ the orthogonal projection onto ${\rm Ker}\,(H_0-\Lambda_q)$. Since the operator $H_0^{-1}WH_0^{-1}$ is compact, the operator $P_q W P_q = \Lambda_q^2 P_q H_0^{-1}WH_0^{-1} P_q$ is compact as well. Similarly, the operators $H_0^{-1}WH_\pm^{-1/2}$ are compact, and hence the operators
$$
P_q W H_\pm^{-1} W P_q = \Lambda_q^2 P_q (H_0^{-1} W H_\pm^{-1/2})(H_\pm^{-1/2} W H_0^{-1}) P_q
$$
are compact as well.

\begin{pr} \label{pr1}
Under the general assumptions of the article we have
$$
n_+((1+\varepsilon)\lambda; P_qWP_q \mp P_q W H_\pm^{-1} W P_q) + O(1) \leq
$$
    \bel{19}
{\mathcal N}_q^\pm(\lambda) \leq
    \ee
$$
n_+((1-\varepsilon)\lambda; P_qWP_q \mp P_q W H_\pm^{-1} W P_q) + O(1), \quad \lambda \downarrow 0,
$$
    for each $\varepsilon \in (0,1)$.
\end{pr}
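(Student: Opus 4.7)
The plan is to identify $T_\pm$, up to the factor $\Lambda_q$, with the compression to the Landau subspace $P_qL^2$ of a self-adjoint operator spectrally equivalent to $H_\pm$ near $\Lambda_q$, and then apply the Weyl inequalities \eqref{wi} to the resulting $2\times 2$ block decomposition.

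First I would record the algebraic identity
\[
T_\pm \;=\; \Lambda_q\,P_q\,\mathcal{K}_\pm\,P_q, \qquad \mathcal{K}_\pm := \pm\bigl(I - \Lambda_q H_\pm^{-1}\bigr).
\]
This is obtained by a direct calculation using $W = \pm(H_\pm - H_0)$ together with $P_qH_0 = \Lambda_qP_q$: one first shows $T_\pm = \Lambda_q\,P_q H_\pm^{-1} W P_q$, and then rewrites $P_q H_\pm^{-1} W P_q = \pm(P_q - \Lambda_q P_qH_\pm^{-1}P_q)$. The spectral mapping $\mu \mapsto \pm(\mu-\Lambda_q)/\mu$ puts eigenvalues of $H_\pm$ in bijection with those of $\mathcal{K}_\pm$ (with the same eigenfunctions), and is strictly monotone and bi-Lipschitz near $\mu=\Lambda_q$ with derivative tending to $1/\Lambda_q$. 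Hence the positive eigenvalues of $\mathcal{K}_\pm$ accumulating at $0$ are in bijection with the eigenvalues of $H_\pm$ in the spectral gap adjacent to $\Lambda_q$ on the relevant side.

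Next I decompose $\mathcal{K}_\pm$ into $2\times 2$ blocks along $L^2(\rd) = P_qL^2\oplus P_q^\perp L^2$, writing $\mathcal{K}_\pm = \operatorname{diag}(T_\pm/\Lambda_q,\,Y_\pm) + R_\pm$, with $Y_\pm = P_q^\perp\mathcal{K}_\pm P_q^\perp$ and $R_\pm$ off-diagonal. Both $R_\pm$ and the difference $Y_\pm - P_q^\perp\mathcal{K}_\pm^{(0)}P_q^\perp$, where $\mathcal{K}_\pm^{(0)} := \pm(I - \Lambda_q H_0^{-1})$, are compact by the compactness of $H_\pm^{-1} - H_0^{-1}$ established in the Appendix. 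Since $P_q^\perp\mathcal{K}_\pm^{(0)}P_q^\perp$ has pure point spectrum $\{\pm(1-\Lambda_q/\Lambda_j):j\ne q\}$, bounded away from $0$ by $2b/\Lambda_{q\pm 1}$, the operator $Y_\pm$ has essential spectrum bounded away from $0$ and therefore contributes only $O(1)$ discrete eigenvalues in any fixed neighbourhood of $0$. A preliminary Schur-complement decoupling of the off-diagonal block $R_\pm$ (standard in the Berezin--Toeplitz / Birman--Schwinger framework of \cite{rw, r0, roztas}) transforms $R_\pm$ into a multiplicative distortion of the spectral variable of size $1 + O(\lambda)$ on the diagonal block $T_\pm/\Lambda_q$, rather than an additive perturbation. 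The Weyl inequalities \eqref{wi}, combined with the $\Lambda_q$-rescaling and the bi-Lipschitz spectral mapping from the first paragraph, then yield the asserted two-sided bound \eqref{19}.

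\textbf{Main obstacle.} The delicate point is the control of the off-diagonal block $R_\pm$ uniformly as $\lambda \downarrow 0$: since $R_\pm$ is compact with eigenvalues accumulating at $0$, a naive application of the Weyl inequality would generate a divergent contribution $n_+(\varepsilon\lambda,R_\pm)$. The Schur-decoupling manipulation sketched above is precisely what converts this compact perturbation into the multiplicative $(1\pm\varepsilon)$-factor appearing in the statement, rather than an additive error; the $O(1)$ term in \eqref{19} then collects the finitely many discrete eigenvalues of $Y_\pm$ near $0$ together with the eigenvalues of $H_\pm$ lying away from $\Lambda_q$ inside the relevant spectral gap.
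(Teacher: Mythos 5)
Your algebraic preparation is fine: with $T_\pm$ denoting the operator in the statement, the identity $T_\pm=\Lambda_q P_q\mathcal{K}_\pm P_q$ with $\mathcal{K}_\pm=\pm(I-\Lambda_q H_\pm^{-1})$ is correct (it is the same reduction as the paper's \eqref{32}, written through the resolvent difference), the spectral mapping $\mu\mapsto\pm(\mu-\Lambda_q)/\mu$ is handled correctly, and the observations that $R_\pm$ is compact and that $Y_\pm=P_q^\perp\mathcal{K}_\pm P_q^\perp$ has essential spectrum bounded away from $0$ are all right. The problem is that the one step which actually carries the proof --- passing from the eigenvalues of $\mathcal{K}_\pm$ near $0^+$ to the counting function of its compression $P_q\mathcal{K}_\pm P_q$ with only $(1\pm\varepsilon)$-factors and $O(1)$ errors --- is not proved. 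You correctly identify that a naive use of \eqref{wi} produces the divergent term $n_\pm(\varepsilon\lambda;R_\pm)$, but your remedy, ``a preliminary Schur-complement decoupling \dots transforms $R_\pm$ into a multiplicative distortion of the spectral variable of size $1+O(\lambda)$,'' is an unsubstantiated assertion, and as stated it is not what a Schur complement gives. Decoupling $\mathcal{K}_\pm-s$ via the Feshbach/Schur factorization replaces the block $A=P_q\mathcal{K}_\pm P_q$ by $A-B(Y_\pm-s)^{-1}B^*$ with $B=P_q\mathcal{K}_\pm P_q^\perp$; this correction is an \emph{additive} compact operator of norm $O(1)$ (not $O(\lambda)$), with no definite sign since $Y_\pm-s$ has spectrum on both sides of $0$ for $q\geq1$, and its effect on $n_+(s;\cdot)$ at levels $s\sim\lambda$ is exactly the kind of contribution you cannot absorb into $(1\pm\varepsilon)$ plus $O(1)$ without further argument. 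Worse, the natural effective operator produced by that route is $A-B(Y_\pm-s)^{-1}B^*$, which differs from the operator appearing in \eqref{19} by a fixed compact operator, so even granting the decoupling you would still owe a comparison back to $P_qWP_q\mp P_qWH_\pm^{-1}WP_q$.

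The paper closes precisely this gap by a different mechanism: it applies the generalized Birman--Schwinger principle not to $(H_0,H_\pm)$ but to the resolvent pair $(H_0^{-1},H_\pm^{-1})$, writing $H_\mp^{-1}=H_0^{-1}\pm T_\mp$ with $T_\mp\geq0$ compact. This converts ${\mathcal N}_q^\pm(\lambda)$ into $n_+(1;T_\mp^{1/2}(\cdot-H_0^{-1})^{-1}T_\mp^{1/2})$ as in \eqref{24}, i.e. the counting threshold is \emph{fixed} at $1$ while the spectral parameter sits inside the sandwiched resolvent. Splitting that resolvent by $P_q$ and $I-P_q$ as in \eqref{26}, the $(I-P_q)$-part converges in norm as $\lambda\downarrow0$ to a fixed compact operator, so the Weyl inequalities at the \emph{fixed} level $\varepsilon$ give the $O(1)$ term \eqref{28}; the $P_q$-part carries the explicit divergent scalar $((\Lambda_q-\lambda)^{-1}-\Lambda_q^{-1})^{-1}$, which is what produces $n_+(\,\cdot\,\lambda;P_qT_\mp P_q)$ in \eqref{29}, and the resolvent identity then yields the operator in \eqref{19}. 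That device --- fixed threshold plus norm-convergence of the complementary block --- is the idea missing from your proposal; without it (or a genuinely worked-out substitute), the main obstacle you name remains open and the proof is incomplete.
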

\begin{proof} The argument is close in spirit to the proof of \cite[Proposition 4.1]{rw}, and is based again on the (generalized) Birman--Schwinger principle. However, since the operator $H_0^{-1/2} W H_0^{-1/2}$ is only bounded but not compact, we cannot apply the Birman--Schwinger principle to the operator pair $(H_0,H_\pm)$, and apply it instead to the resolvent pair $(H_0^{-1}, H_{\pm}^{-1})$.
First of all, note that there exist $\Lambda_-$ and $\Lambda_+$ with $\Lambda_- \in (0, \Lambda_0)$ if $q = 0$, $\Lambda_- \in (\Lambda_{q-1}, \Lambda_q)$ if $q \in \N$, and $\Lambda_+ \in (\Lambda_q, \Lambda_{q+1})$ if $q \in \Z_+$, such that
    \bel{20}
    {\mathcal N}_q^-(\lambda) = N_{(\Lambda_-, \Lambda_q-\lambda)}(H_-), \quad \lambda \in (0, \Lambda_q - \Lambda_-),
    \ee
     \bel{21}
    {\mathcal N}_q^+(\lambda) = N_{(\Lambda_q+\lambda,\Lambda_+)}(H_+), \quad \lambda \in (0, \Lambda_+ - \Lambda_q).
    \ee
    Further, evidently,
    \bel{22}
    N_{(\Lambda_-, \Lambda_q-\lambda)}(H_-) = N_{((\Lambda_q-\lambda)^{-1},\Lambda_-^{-1})}(H_-^{-1}) = N_{((\Lambda_q-\lambda)^{-1},\Lambda_-^{-1})}(H_0^{-1}+T_-),
    \ee
     \bel{23}
    N_{(\Lambda_q+\lambda,\Lambda_+)}(H_+) = N_{(\Lambda_+^{-1},(\Lambda_q+\lambda)^{-1})}(H_+^{-1}) = N_{(\Lambda_+^{-1},(\Lambda_q+\lambda)^{-1})}(H_0^{-1}-T_+) ,
    \ee
    with $T_-: = H_-^{-1} - H_0^{-1}$ and $T_+: = H_0^{-1} - H_+^{-1}$. Note that the operators $T_\pm$ are non negative and compact. By the generalized Birman--Schwinger principle
    (see e.g. \cite[Theorem 1.3]{adh}) we have
    \begin{align}
    N_{((\Lambda_q-\lambda)^{-1},\Lambda_-^{-1})}(H_0^{-1}+T_-) &=
    n_+(1; T_-^{1/2}((\Lambda_q-\lambda)^{-1} - H_0^{-1})^{-1}T_-^{1/2})\nonumber\\ & - n_+(1; T_-^{1/2}(\Lambda_-^{-1} - H_0^{-1})^{-1}T_-^{1/2})\nonumber\\ &- {\rm dim \; Ker}\,(H_- - \Lambda_-),
    \label{24}
    \end{align}
    \begin{align}
    N_{(\Lambda_+^{-1},(\Lambda_q+\lambda)^{-1})}(H_0^{-1}-T_+)&=
    n_+(1; T_+^{1/2}(H_0^{-1} - (\Lambda_q+\lambda)^{-1})^{-1}T_+^{1/2})\nonumber\\ & - n_+(1; T_+^{1/2}(H_0^{-1}-\Lambda_+^{-1})^{-1}T_+^{1/2})\nonumber\\ &- {\rm dim \; Ker}\,(H_+ - \Lambda_+).
    \label{25}
    \end{align}
    Since the operators $T_\pm$ are compact, and $\Lambda_\pm \not \in \sigma(H_0)$, we find that the two last terms on the r.h.s. of \eqref{24} and \eqref{25} which are independent of $\lambda$, are finite. Next, the Weyl inequalities \eqref{wi} imply
\begin{multline}\label{26}
    n_+(1+\varepsilon; T_-^{1/2}((\Lambda_q-\lambda)^{-1} - H_0^{-1})^{-1}P_q T_-^{1/2}) \\- n_-(\varepsilon; T_-^{1/2}((\Lambda_q-\lambda)^{-1} - H_0^{-1})^{-1}(I-P_q)T_-^{1/2}) \leq\\
    n_+(1; T_-^{1/2}((\Lambda_q-\lambda)^{-1} - H_0^{-1})^{-1}T_-^{1/2}) \leq\\
    n_+(1-\varepsilon; T_-^{1/2}((\Lambda_q-\lambda)^{-1} - H_0^{-1})^{-1}P_q T_-^{1/2}) \\+ n_+(\varepsilon; T_-^{1/2}((\Lambda_q-\lambda)^{-1} - H_0^{-1})^{-1}(I-P_q)T_-^{1/2})
\end{multline}
    for any $\varepsilon \in (0,1)$. The operator $T_-^{1/2}((\Lambda_q-\lambda)^{-1} - H_0^{-1})^{-1}(I-P_q)T_-^{1/2}$ tends in norm as $\lambda \downarrow 0$ to the compact operator
    $$T_-^{1/2}\left(\sum_{j \in \Z_+\setminus\{q\}} (\Lambda_q^{-1} -\Lambda_j^{-1})^{-1} P_j \right)T_-^{1/2}.
    $$
    Therefore,
    \bel{28}
    n_\pm(\varepsilon; T_-^{1/2}((\Lambda_q-\lambda)^{-1} - H_0^{-1})^{-1}(I-P_q)T_-^{1/2}) = O(1), \quad \lambda \downarrow 0,
    \ee
    for any $\varepsilon > 0$. Next, for any $s>0$ we have
\begin{multline}\label{29}
    n_+(s; T_-^{1/2}((\Lambda_q-\lambda)^{-1} - H_0^{-1})^{-1}P_q T_-^{1/2}) =\\
    n_+(s; ((\Lambda_q-\lambda)^{-1} - \Lambda_q^{-1})^{-1}T_-^{1/2}P_q T_-^{1/2}) =
    n_+(s\lambda (\Lambda_q-\lambda)^{-1}\Lambda_q^{-1}; P_q T_- P_q).
\end{multline}
    Hence, \eqref{24} and \eqref{26} - \eqref{29} yield
    $$
    n_+((1+\varepsilon)\lambda (\Lambda_q-\lambda)^{-1}\Lambda_q^{-1}; P_q T_- P_q) + O(1) \leq
    $$
    $$
    N_{((\Lambda_q-\lambda)^{-1},\Lambda_-^{-1})}(H_0^{-1}+T_-) \leq
    $$
    \bel{30}
    n_+((1-\varepsilon)\lambda (\Lambda_q-\lambda)^{-1}\Lambda_q^{-1}; P_q T_- P_q) + O(1), \quad \lambda \downarrow 0,
    \ee
    for any $\varepsilon \in (0,1)$. Similarly, \eqref{25} and the analogues of \eqref{26} - \eqref{29} for positive perturbations, imply
    $$
    n_+((1+\varepsilon)\lambda (\Lambda_q+\lambda)^{-1}\Lambda_q^{-1}; P_q T_+ P_q) + O(1) \leq
    $$
    $$
    N_{(\Lambda_+^{-1},(\Lambda_q+\lambda)^{-1})}(H_0^{-1}-T_+) \leq
    $$
    \bel{31}
    n_+((1-\varepsilon)\lambda (\Lambda_q+\lambda)^{-1}\Lambda_q^{-1}; P_q T_+ P_q) + O(1), \quad \lambda \downarrow 0.
    \ee
    By the resolvent identity, we have
    $T_\pm = H_0^{-1} W H_0^{-1} \mp H_0^{-1} W H_\pm^{-1} W H_0^{-1}$,
    so that
    $$
    P_q T_\pm P_q = \Lambda_q^{-2}(P_q W P_q \mp P_q W H_\pm^{-1} W P_q).
    $$
    Thus,
     \bel{32}
    n_+(s;P_q T_\pm P_q) = n_+(s\Lambda_q^{2}; P_q W P_q \mp P_q W H_\pm^{-1} W P_q), \quad s>0.
    \ee
    Putting together \eqref{20} -- \eqref{23} and \eqref{30} -- \eqref{32}, we easily obtain \eqref{19}.
      \end{proof}
      \section{Unitary Equivalence for Berezin-Toeplitz Operators}
\label{ss33}
\setcounter{equation}{0}
%%%%%%%%%%%%%%%%%%%%%%%%%%%%%%%%%%%%%%%%%%%%%%%%%%%%%%%%%%%%%%%%%%%%%%%%%%%%%%
       Our first goal in this section is to show that under certain regularity conditions on the matrix $m$, the operator $P_q W P_q$, $q \in \Z_+$, with domain $P_q L^2(\rd)$, is unitarily equivalent to $P_0 w_q P_0$ with domain $P_0 L^2(\rd)$, where $w_q$ is the multiplier by a suitable function $w_q : \rd \to {\mathbb C}$. In fact, we will need a slightly more general result, and that is why  we introduce at first the appropriate notations. \\
      As usual, for $x = (x_1, x_2) \in \rd$ we set $z: = x_1 + ix_2$ and $\overline{z}: = x_1 - ix_2$ so that
      $$
      \frac{\partial}{\partial z} = \frac{1}{2}\left(\frac{\partial}{\partial x_1} -i \frac{\partial}{\partial x_2}\right), \quad  \frac{\partial}{\partial \overline{z}} = \frac{1}{2}\left(\frac{\partial}{\partial x_1} +i \frac{\partial}{\partial x_2}\right).
      $$
      Introduce the magnetic annihilation operator
      $$
      a: = -2i e^{-b|x|^2/4} \frac{\partial}{\partial \overline{z}} e^{b|x|^2/4} = - 2i \left(\frac{\partial}{\partial \overline{z}} + \frac{bz}{4}\right),
      $$
      and the magnetic creation operator
      $$
      a^*: = -2i e^{b|x|^2/4} \frac{\partial}{\partial z} e^{-b|x|^2/4} = - 2i \left(\frac{\partial}{\partial z} - \frac{b\overline{z}}{4}\right),
      $$
      with common domain ${\rm Dom}\,a = {\rm Dom}\,a^* = {\rm Dom}\,H_0^{1/2}$. The operators $a$ and $a^*$ are closed and mutually adjoint in $L^2(\rd)$. On
      ${\rm Dom}\,H_0$ we have $[a,a^*] = 2b$ and
      \bel{34}
      H_0 = a^* a + b = a a^* - b = \frac{1}{2} (a a^* + a^* a).
      \ee
      Moreover, on ${\rm Dom}\,H_0^{1/2}$ we have
      \bel{35}
      \Pi_1 = \frac{1}{2}(a+a^*), \quad \Pi_2 = \frac{1}{2i}(a-a^*),
      \ee
      the operators $\Pi_j$, $j=1,2$, being introduced in \eqref{0}. Next, define the operator $\A : {\rm Dom}\,H_0^{1/2} \to L^2(\rd; {\mathbb C}^2)$ by
      $$
      \A u : = \left(
      \begin{array} {c}
      a^*u \\
      au
      \end{array}
      \right), \quad u \in  {\rm Dom}\,H_0^{1/2}.
      $$
      Then, \eqref{34} implies that $H_0 = \frac{1}{2} \A^* \A$. Further, introduce the Hermitian matrix-valued function
      $$
      \Omega : = \left(
      \begin{array} {cc}
      \omega_{11} & \omega_{12} \\
      \omega_{21} & \omega_{22}
      \end{array}
      \right),
      $$
      with $\omega_{jk} \in L^{\infty}(\rd)$, $j,k=1,2$. Fix $q \in \Z_+$ and define the operator
      \bel{59}
      P_q  \A^* \Omega \A P_q = \Lambda_q P_q H_0^{-1/2} \A^* \Omega \A H_0^{-1/2} P_q,
      \ee
      bounded and self-adjoint in $P_q L^2(\rd)$. Utilizing \eqref{35}, we easily find that
      \bel{57}
      P_q W P_q = \frac{1}{2} P_q  \A^* U \A P_q
      \ee
      where
      \bel{58}
      U : = {\mathcal O}^* m {\mathcal O}, \quad {\mathcal O} : = \frac{1}{\sqrt{2}} \left(
      \begin{array} {cc}
      1 & 1 \\
      i & -i
      \end{array}
      \right),
      \ee
      so that $U = \left(
      \begin{array} {cc}
      u_{11} & u_{12} \\
      u_{21} & u_{22}
      \end{array}
      \right),
      $
      with
      $$
      u_{11} : = \frac{1}{2} \left({\rm Tr}\,m - 2 {\rm Im}\,m_{12}\right), \quad u_{22} : = \frac{1}{2} \left({\rm Tr}\,m + 2 {\rm Im}\,m_{12}\right),
      $$
      $$
      u_{12} = \overline{u_{21}} : = \frac{1}{2} \left(m_{11} - m_{22} - 2i {\rm Re}\,m_{12}\right).
      $$
      Introduce the Laguerre polynomials
      \bel{39}
      {\rm L}_q^{(m)} : = \sum_{j=0}^q \binom{q+m}{q-j} \frac{(-t)^j}{j!}, \quad t \in \re, \quad q \in \Z_+, \quad m \in \Z_+;
      \ee
      as usual, we write ${\rm L}_q^{(0)} = {\rm L}_q$, and for notational convenience we set $q {\rm L}_{q-1} = 0$ for $q=0$. By \cite[Eq. 8.974.3]{gr} we have
      \bel{48}
      \sum_{j=0}^q {\rm L}_j^{(m)}(t) = {\rm L}_q^{(m+1)}(t), \quad t \in \re, \quad q \in \Z_+, \quad m \in \Z_+.
       \ee
       \begin{pr} \label{pr2}
       Let $\Omega$ be a Hermitian $2 \times 2$ matrix-valued function with entries $\omega_{jk} \in C^{\infty}_{\rm b}(\rd)$, $j,k=1,2$.
       Fix $q \in \Z_+$. Then the operator $P_q \A^* \Omega \A P_q$ with domain $P_q L^2(\rd)$, is unitarily equivalent to the operator $P_0 w_q P_0$ with domain $P_0 L^2(\rd)$ where
       \bel{63}
       w_q = w_q(\Omega) :=
\begin{cases}
       2b(q+1) {\rm L}_{q+1}\left(-\frac{\Delta}{2b}\right) \omega_{11} +
       2bq {\rm L}_{q-1}\left(-\frac{\Delta}{2b}\right) \omega_{22} \vspace{3pt}\\\mkern150mu- 8 {\rm Re}\,{\rm L}_{q-1}^{(2)}\left(-\frac{\Delta}{2b}\right)\frac{\partial^2\omega_{12}}{\partial \overline{z}^2} \quad {\rm if} \quad q \geq 1,\\
       2b {\rm L}_{1}\left(-\frac{\Delta}{2b}\right) \omega_{11} \quad {\rm if} \quad q = 0,
\end{cases}
       \ee
       $\Delta$ is the standard Laplacian in $\rd$ so that, in accordance with \eqref{39}, ${\rm L}_{s}^{(m)}\left(-\frac{\Delta}{2b}\right)$ with $s \in \Z_+$ and $m \in \Z_+$, is just the differential operation $\sum_{j=0}^s \binom{s+m}{s-j} \frac{\Delta^j}{j!(2b)^j}$ of order $2s$ with constant coefficients.
       \end{pr}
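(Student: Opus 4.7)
The strategy is to conjugate $P_q\A^*\Omega\A P_q$ by a ladder unitary from $P_0 L^2(\rd)$ onto $P_q L^2(\rd)$, and then reduce the resulting operator to a Toeplitz operator on $P_0 L^2(\rd)$ with an explicit symbol; the Laguerre polynomials emerge in the final combinatorial step.

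First I would expand
\begin{equation*}
\A^*\Omega\A = a\,\omega_{11}a^* + a^*\omega_{22}a + a\,\omega_{12}a + a^*\omega_{21}a^*,
\end{equation*}
where the last two terms are mutually adjoint since $\omega_{21}=\overline{\omega_{12}}$. The case $q=0$ collapses because $aP_0=0$: only the $a\omega_{11}a^*$ piece survives, producing directly the $q=0$ branch of \eqref{63}. For general $q$, introduce the intertwiner $U_n:=((2b)^n n!)^{-1/2}(a^*)^n$, a unitary isomorphism $P_0 L^2(\rd)\to P_n L^2(\rd)$ by virtue of the Heisenberg identity $a^n(a^*)^n P_0 = (2b)^n n!\,P_0$ (induction from $[a,a^*]=2b$ and $aP_0=0$). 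The shift relations $a^*U_n=\sqrt{2b(n+1)}\,U_{n+1}$, $aU_n=\sqrt{2bn}\,U_{n-1}$, and the adjoint versions $U_q^*a=\sqrt{2b(q+1)}\,U_{q+1}^*$, $U_q^*a^*=\sqrt{2bq}\,U_{q-1}^*$ (the latter using $P_0a^qa^*=2bqP_0a^{q-1}$), then yield
\begin{equation*}
U_q^*(P_q a\omega_{11}a^* P_q)U_q = 2b(q+1)\,U_{q+1}^*\,\omega_{11}\,U_{q+1},
\end{equation*}
\begin{equation*}
U_q^*(P_q a^*\omega_{22} a P_q)U_q = 2bq\,U_{q-1}^*\,\omega_{22}\,U_{q-1},
\end{equation*}
\begin{equation*}
U_q^*(P_q a\omega_{12} a P_q)U_q = 2b\sqrt{q(q+1)}\,U_{q+1}^*\,\omega_{12}\,U_{q-1},
\end{equation*}
reducing the problem to computing cross-Toeplitz operators $U_n^* g\,U_m = ((2b)^{n+m} n!\,m!)^{-1/2}\,P_0 a^n g (a^*)^m P_0$ on $P_0 L^2(\rd)$.

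For the latter I would establish three elementary facts: (i) $P_0(a^*)^k=0$ and $a^k P_0=0$ for $k\geq 1$, since $(a^*)^k$ raises the Landau level and $aP_0=0$; (ii) the commutators $[a,g]=-2i\partial_{\bar z}g$ and $[a^*,g]=-2i\partial_z g$ as multiplication operators; (iii) the relation $a^N(a^*)^K P_0=(2b)^N K!/(K-N)!\,(a^*)^{K-N}P_0$ for $N\leq K$, by iteration of $[a,a^*]=2b$. Combining (i) and (ii) by iteration gives the two basic identities
\begin{equation*}
P_0\, g(a^*)^k P_0 = P_0\bigl[(2i\partial_z)^k g\bigr]P_0, \qquad P_0\, a^k g\,P_0 = P_0\bigl[(-2i\partial_{\bar z})^k g\bigr]P_0.
\end{equation*}
Applying these to $P_0 a^n f (a^*)^m P_0$ by moving the $a$'s past $f$ via (ii) and then past $(a^*)^m$ via (iii), and using $\partial_z\partial_{\bar z}=\Delta/4$, I obtain a finite polynomial in $\Delta$ (and, when $n\neq m$, in $\partial_{\bar z}^{n-m}$) applied to $f$ and sandwiched between $P_0$'s.

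Specializing to $n=m=q+1$ and $n=m=q-1$ for the diagonal pieces produces $P_0[L_{q+1}(-\Delta/(2b))\omega_{11}]P_0$ and $P_0[L_{q-1}(-\Delta/(2b))\omega_{22}]P_0$, via the identity $L_q(-t)=\sum_j\binom{q}{j}t^j/j!$. Specializing to $(n,m)=(q+1,q-1)$ for the off-diagonal piece yields, after factoring out $\partial_{\bar z}^2$ and a numerical factor $-4$, a sum whose $i$-th term has coefficient $\binom{q+1}{i+2}/(i!(2b)^i)$; via the identity $\binom{q+1}{q-1-i}=\binom{q+1}{i+2}$ and the definition \eqref{39}, this sum equals $L_{q-1}^{(2)}(-\Delta/(2b))$. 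Combining with its Hermitian adjoint, and using that $L_{q-1}^{(2)}(-\Delta/(2b))$ is a real operator while $\partial_z^2 \overline{\omega_{12}} = \overline{\partial_{\bar z}^2 \omega_{12}}$, produces the $-8\,\mathrm{Re}$ prefactor of \eqref{63}. I expect the main obstacle to be this off-diagonal combinatorics: the cross-level Toeplitz $U_{q+1}^*\omega_{12}U_{q-1}$ produces a \emph{generalized} Laguerre polynomial $L_{q-1}^{(2)}$ acting on $\partial_{\bar z}^2\omega_{12}$ rather than $\omega_{12}$ itself, and the index shift demands careful bookkeeping of the binomial coefficients.
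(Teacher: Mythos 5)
Your proof is correct, and its overall strategy is the same as the paper's: conjugate by the ladder unitary built from powers of $a^*$ (your $U_q$ is precisely the inverse of the map ${\mathcal W}$ that the paper defines through the angular momentum basis, since $\varphi_{q,k}\propto(a^*)^q\varphi_{0,k}$), then compute lowest-Landau-level compressions by commuting $a,a^*$ through the symbol. The difference lies in the combinatorial core. The paper works with matrix elements $\Xi_{m,s}(V;k,\ell)$: it quotes \cite[Lemma 9.2]{bpr} for the diagonal identity \eqref{40}, and handles the cross term through the recursions \eqref{45}--\eqref{46}, two inductions, and the Laguerre summation formula \eqref{48}, which assembles ${\rm L}_{q-1}^{(2)}$ out of lower-order Laguerre polynomials. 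You instead obtain one closed-form expansion of $P_0 a^n f (a^*)^m P_0$ (binomial commutator expansion of $a^n$ past $f$, the Wick-type identity $a^N(a^*)^K P_0=(2b)^N\frac{K!}{(K-N)!}(a^*)^{K-N}P_0$, and annihilation of leftover creation operators by $P_0 a^*=0$), which simultaneously reproves \eqref{40} and yields the cross term directly: the coefficient bookkeeping via $\binom{q+1}{q-1-i}=\binom{q+1}{i+2}$ and the prefactor $(-2i)^2=-4$ give exactly $-4\,{\rm L}_{q-1}^{(2)}\left(-\frac{\Delta}{2b}\right)\frac{\partial^2\omega_{12}}{\partial\overline{z}^2}$, hence the $-8\,{\rm Re}$ term of \eqref{63} after adding the adjoint piece, and the normalizations in front of the $\omega_{11}$ and $\omega_{22}$ terms check out as well. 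What your route buys is self-containedness and a uniform treatment of diagonal and off-diagonal contributions; the small price is that the operator-level manipulations (as opposed to the paper's computations of matrix elements against $\varphi_{q,k}$) deserve a word on domains and on the surjectivity of $(a^*)^q:P_0L^2(\rd)\to P_qL^2(\rd)$ --- both standard, and verified by testing your identities on the angular momentum basis, which is exactly the bookkeeping the paper makes explicit.
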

       \begin{proof}
       Set
       $$
       \varphi_{0,k}(x) : = \sqrt{\frac{b}{2\pi k!}} \left(\frac{b}{2}\right)^{k/2}z^k e^{-b|x|^2/4}, \quad x \in \rd, \quad k \in \Z_+,
       $$
       $$
       \varphi_{q,k}(x) : = \sqrt{\frac{1}{(2b)^q q!}}(a^*)^q\varphi_{0,k}(x), \quad x \in \rd, \quad k \in \Z_+, \quad q \in {\mathbb N}.
       $$
       Then $\left\{\varphi_{q,k}\right\}_{k \in \Z_+}$ is an orthonormal basis of $P_q L^2(\rd)$ called sometimes {\em the angular momentum basis}
       (see e.g. \cite{rw} or \cite[Subsection 9.1]{bpr}). Evidently, for $k \in \Z_+$ we have
       \bel{37}
       a^* \varphi_{q,k} = \sqrt{2b(q+1)}  \varphi_{q+1,k}, \quad q \in \Z_+,
       \quad a \varphi_{q,k} =
\begin{cases}
       \sqrt{2bq}  \varphi_{q-1,k}, \quad q \geq 1,\\
       0 , \quad q = 0.
\end{cases}
       \ee
       Define the unitary operator ${\mathcal W}: P_q L^2(\rd) \to P_0 L^2(\rd)$ by ${\mathcal W} : u \mapsto v$
       where
       \bel{38}
       u = \sum_{k  \in \Z_+} c_k \varphi_{q,k}, \quad v = \sum_{k  \in \Z_+} c_k \varphi_{0,k}, \quad \{c_k\}_{k \in \Z_+} \in \ell^2(\Z_+).
       \ee
       We will show that
       \bel{36}
       P_q \A^* \Omega \A P_q = {\mathcal W}^* P_0 w_q P_0 {\mathcal W}.
       \ee
       For $V \in C_{\rm b}^{\infty}(\rd)$, $m,s \in \Z_+$, and $k,\ell \in \Z_+$, set
       $$
       \Xi_{m,s}(V;k,\ell) : = \langle V \varphi_{m,k}, \varphi_{s,\ell}\rangle
       $$
       where $\langle\cdot,\cdot\rangle$ denotes the scalar product in $L^2(\rd)$. Taking into account \eqref{37} and \eqref{38}, we easily find that
       $$
       \langle P_q \A^* \Omega \A P_q u, u\rangle =
       $$
       $$
       2b \sum_{k \in \Z_+} \sum_{\ell \in \Z_+} \left((q+1)\Xi_{q+1,q+1}(\omega_{11};k,\ell) + q\Xi_{q-1,q-1}(\omega_{22};k,\ell)\right)c_k \overline{c_\ell} \;  +
       $$
       \bel{56}
       2b\sqrt{q(q+1)} 2{\rm Re} \, \sum_{k \in \Z_+} \sum_{\ell \in \Z_+} \Xi_{q+1,q-1}(\omega_{21};k,\ell)c_k \overline{c_\ell},
       \ee
       if $q \geq 1$, and
       \bel{41}
       \langle P_0 \A^* \Omega \A P_0 u, u\rangle = 2b \sum_{k \in \Z_+} \sum_{\ell \in \Z_+} \Xi_{1,1}(\omega_{11};k,\ell) c_k \overline{c_\ell}.
       \ee
       Moreover,
       \bel{42}
       \langle P_0 w_q P_0 v, v\rangle = \sum_{k \in \Z_+} \sum_{\ell \in \Z_+} \Xi_{0,0}(w_q;k,\ell) c_k \overline{c_\ell}, \quad q \in \Z_+.
       \ee
       In \cite[Lemma 9.2]{bpr} (see also the remark after Eq.(2.2) in \cite{bbr}), it was shown that
       \bel{40}
       \Xi_{m,m}(V;k,\ell) = \Xi_{0,0}\left({\rm L}_m\left(-\frac{\Delta}{2b}\right)V;k,\ell\right), \quad m \in \Z_+.
       \ee
       Now \eqref{41}, \eqref{40} with $m=1$ and $V = \omega_{11}$, and \eqref{42} with $q=0$, imply \eqref{36}  in the case $q=0$.
       Assume $q \geq 1$. By \eqref{40}, we have
      \bel{43}
        \Xi_{q+1,q+1}(\omega_{11};k,\ell) = \Xi_{0,0}\left({\rm L}_{q+1}\left(-\frac{\Delta}{2b}\right)\omega_{11};k,\ell\right),
         \ee
         \bel{44}
         \Xi_{q-1,q-1}(\omega_{22};k,\ell) = \Xi_{0,0}\left({\rm L}_{q-1}\left(-\frac{\Delta}{2b}\right)\omega_{22};k,\ell\right).
       \ee
       Let us now consider the quantity $\Xi_{q+1,q-1}(V;k,\ell)$. Using \eqref{37}, we easily find that for $q \geq 2$ we have
       \bel{45}
       \Xi_{q+1,q-1}(V;k,\ell) = \frac{1}{\sqrt{2b(q+1)}} \Xi_{q,q-1}([V,a^*];k,\ell) + \sqrt{\frac{q-1}{q+1}}  \Xi_{q,q-2}(V;k,\ell),
       \ee
\begin{multline}\label{46}
       \Xi_{q,q-1}([V,a^*];k,\ell) = \frac{1}{\sqrt{2bq}} \Xi_{q-1,q-1}([[V,a^*],a^*];k,\ell) \\+ \sqrt{\frac{q-1}{q}} \Xi_{q-1,q-2}([V,a^*];k,\ell).
\end{multline}
       Moreover, $[V,a^*] = 2i\frac{\partial V}{\partial z}$, and
       \bel{47}
       [[V,a^*],a^*] = -4\frac{\partial^2 V}{\partial z^2}.
       \ee
       Using \eqref{46}, it is not difficult to prove by induction that
       \bel{49}
       \Xi_{q,q-1}([V,a^*];k,\ell) = \frac{1}{\sqrt{2bq}}\sum_{j=0}^{q-1} \Xi_{j,j}([[V,a^*],a^*];k,\ell), \quad q\geq1.
       \ee
       Now \eqref{40}, \eqref{47}, and \eqref{48} imply
       $$
       \sum_{j=0}^{q-1} \Xi_{j,j}([[V,a^*],a^*];k,\ell) = \sum_{j=0}^{q-1} \Xi_{0,0}\left(-4{\rm L}_{j}\left(-\frac{\Delta}{2b}\right)\frac{\partial^2 V}{\partial z^2};k,\ell\right) =
       $$
       \bel{50}
       \Xi_{0,0}\left(-4{\rm L}_{q-1}^{(1)}\left(-\frac{\Delta}{2b}\right)\frac{\partial^2 V}{\partial z^2};k,\ell\right).
       \ee
       Setting
       \bel{51}
       {\mathcal D}_q : = -4{\rm L}_{q-1}^{(1)}\left(-\frac{\Delta}{2b}\right)\frac{\partial^2 }{\partial z^2}, \quad q \in {\mathbb N},
       \ee
       we find that \eqref{49} and \eqref{50} imply
       \bel{52}
       \Xi_{q,q-1}([V,a^*];k,\ell) =  \frac{1}{\sqrt{2bq}} \Xi_{0,0}\left({\mathcal D}_q V;k,\ell\right).
       \ee
       Bearing in mind \eqref{45}, \eqref{40}, and \eqref{52}, it is not difficult to prove by induction that
       \bel{53}
       \Xi_{q+1,q-1}(V;k,\ell) = \frac{1}{2b\sqrt{q(q+1)}}\sum_{s=1}^q \Xi_{0,0}\left({\mathcal D}_{s} V;k,\ell\right).
       \ee
       Note that \eqref{48} and \eqref{53} imply
       \bel{54}
       \sum_{s=1}^q {\mathcal D}_{s} = -4{\rm L}_{q-1}^{(2)}\left(-\frac{\Delta}{2b}\right)\frac{\partial^2 }{\partial z^2}.
       \ee
       Now, \eqref{53} and \eqref{54} entail
       \bel{55}
       2b\sqrt{q(q+1)} \Xi_{q+1,q-1}(\omega_{21};k,\ell) = \Xi_{0,0}\left(-4{\rm L}_{q-1}^{(2)}\left(-\frac{\Delta}{2b}\right)\frac{\partial^2 \omega_{21}}{\partial z^2}; k,\ell\right).
       \ee
       Finally, \eqref{56} and \eqref{42} combined with \eqref{43}, \eqref{44}, and \eqref{55}, yield \eqref{36} with $q \geq 1$.
       \end{proof}
       In the rest of the section we establish  two other suitable representations for  the operators $P_q V P_q$, $q \in \Z_+$, with $V : \rd \to {\mathbb C}$.
       \begin{pr} \label{pr3}
       {\rm (i) \cite[Lemma 3.1]{fr}, \cite[Subsection 2.3]{bbr}} Let $V \in L_{\rm loc}^1(\rd)$ satisfy $\lim_{|x| \to \infty}V(x) = 0$. Then for each $q \in \Z_+$ the operator $P_q V P_q$ is compact. \\
       {\rm (ii) \cite[Lemma 3.3]{rw}} Assume in addition that $V$ is radially symmetric, i.e. there exists $v :[0,\infty) \to {\mathbb C}$ such that $V(x) = v(|x|)$, $x \in \rd$. Then the eigenvalues of the operator $P_q V P_q$ with domain $P_q L^2(\rd)$, counted with the multiplicities, coincide with the set
        \bel{53b}
       \left\{\langle V\varphi_{q,k}, \varphi_{q,k}\rangle\right\}_{k \in \Z_+}.
       \ee
       In particular, the eigenvalues of $P_0VP_0$ coincide with
       \bel{53a}
       \frac{1}{k!} \int_0^\infty v((2t/b)^{1/2}) e^{-t} t^k dt, \quad k \in \Z_+.
       \ee
       \end{pr}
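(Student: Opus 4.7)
The plan is to reduce both parts to the reproducing-kernel structure of the Landau projection $P_q$ combined with the angular-momentum basis $\{\varphi_{q,k}\}_{k \in \Z_+}$ introduced in the proof of Proposition \ref{pr2}. The integral kernel $P_q(x,y) = \sum_{k \in \Z_+}\varphi_{q,k}(x)\overline{\varphi_{q,k}(y)}$ together with $P_q^2 = P_q$ and magnetic-translation invariance yield $P_q(x,x) \equiv b/(2\pi)$, whence by Cauchy--Schwarz $|P_q(x,y)|^2 \le (b/(2\pi))^2$, and every $f \in P_q L^2(\rd)$ enjoys the pointwise bound $|f(x)| \le \sqrt{b/(2\pi)}\,\|f\|_{L^2}$.

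For (i) I would decompose $V = V_1 + V_2$ with $V_1 := V\,\one_{\{|V| \le 1\}} \in L^\infty(\rd)$ still vanishing at infinity, and $V_2 := V\,\one_{\{|V| > 1\}} \in L^1(\rd)$ compactly supported (the latter because $V(x) \to 0$, and the integrability because $V \in L^1_{\rm loc}$). For each bounded compactly supported approximant $W$ of $V_j$, the operator $P_q W P_q$ is Hilbert--Schmidt via $\|W^{1/2} P_q\|_{HS}^2 = \int W(y) P_q(y,y)\,dy = (b/(2\pi))\|W\|_{L^1}$ applied to $W \ge 0$ (and decomposition into sign-definite real and imaginary parts otherwise). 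For $V_1$, approximate by $V_1\,\one_{|x|\le n}$ using $L^\infty$-convergence and the trivial estimate $\|P_q U P_q\| \le \|U\|_\infty$ for the remainder. For $V_2$, approximate in $L^1(\rd)$ using instead the estimate $\|P_q U P_q\| \le (b/(2\pi))\|U\|_{L^1}$, which is a direct consequence of the pointwise bound on $P_q L^2$ functions. In both cases $P_q V_j P_q$ is a norm limit of compact operators, hence compact.

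For (ii) I would exploit rotational symmetry. Let $R_\theta$ denote the unitary rotation $(R_\theta f)(x) := f(R_{-\theta}x)$. In polar coordinates $\varphi_{0,k}(x) \propto r^k e^{ik\phi} e^{-br^2/4}$, so $R_\theta \varphi_{0,k} = e^{-ik\theta}\varphi_{0,k}$. Both $\partial_z$ and multiplication by $\bar z$ acquire the factor $e^{i\theta}$ under conjugation by $R_\theta$, so $R_\theta a^* R_\theta^{*} = e^{i\theta} a^*$, and induction on $q$ yields $R_\theta \varphi_{q,k} = e^{i(q-k)\theta}\varphi_{q,k}$. For radial $V$ the identity $R_\theta V R_\theta^{*} = V$ then gives
\begin{equation*}
\langle V\varphi_{q,k},\varphi_{q,\ell}\rangle \,=\, \langle V R_\theta\varphi_{q,k}, R_\theta\varphi_{q,\ell}\rangle \,=\, e^{i(\ell-k)\theta}\langle V\varphi_{q,k},\varphi_{q,\ell}\rangle
\end{equation*}
for every $\theta$, which forces the matrix element to vanish unless $k = \ell$. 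Thus $\{\varphi_{q,k}\}_{k \in \Z_+}$ diagonalizes $P_q V P_q$ with eigenvalues as in \eqref{53b}. For $q=0$ one inserts $|\varphi_{0,k}(x)|^2 = \frac{b}{2\pi k!}(b/2)^k r^{2k} e^{-br^2/2}$, integrates out the angle to produce a factor $2\pi$, and performs the change of variables $t = br^2/2$ to recover \eqref{53a}.

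The main obstacle is the splitting in (i): the $L^1_{\rm loc}$ hypothesis is borderline, and one must arrange that both the bounded-at-infinity and the locally singular parts yield compact contributions. The crucial ingredient throughout is the explicit $L^\infty$-embedding $P_q L^2(\rd) \hookrightarrow L^\infty(\rd)$ with constant $\sqrt{b/(2\pi)}$, which converts $L^1$-control of the coefficient into operator-norm control.
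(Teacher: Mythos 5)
Your proposal is correct. Note that the paper gives no proof of Proposition \ref{pr3} at all: both parts are quoted from the literature ((i) from \cite{fr} and \cite{bbr}, (ii) from \cite{rw}), and your self-contained argument is essentially the standard one used there — compactness via the constant diagonal $P_q(x,x)=b/(2\pi)$ of the reproducing kernel, the bound $\|f\|_{L^\infty}\leq (b/2\pi)^{1/2}\|f\|_{L^2}$ on $P_qL^2(\rd)$, and Hilbert--Schmidt plus norm-approximation estimates for the split $V=V_1+V_2$; and diagonalization of $P_qVP_q$ in the angular momentum basis via rotation invariance (equivalently, commutation with the angular momentum operator), followed by the explicit computation of $\langle V\varphi_{0,k},\varphi_{0,k}\rangle$ giving \eqref{53a}. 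The only detail left implicit is the evaluation of the constant $P_q(x,x)$ for $q\geq 1$, but any finite bound on the diagonal suffices for the compactness argument, so this is harmless.
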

       {\em Remarks}: (i) Let us recall that if $f$ is, say, a bounded function of exponential decay, then
       $$
       ({\mathcal M}f)(z) : = \int_0^\infty f(t) t^{z-1} dt, \quad z \in {\mathbb C}, \quad {\rm Re}\,z > 0,
       $$
       is called sometimes {\em the Mellin transform} of $f$. Some of the asymptotic properties as $k \to \infty$ of the integrals \eqref{53a} which we will later obtain and use in the proofs of Theorem \ref{th1} and \ref{th2}, could possibly be deduced from the general theory of the Mellin transform.\\
       (ii) Combining Propositions \ref{pr2} and \ref{pr3}, we find that if the matrix-valued function $\Omega$ is radially symmetric and diagonal, then the operator $P_q \A^* \Omega \A P_q$ acting in $P_q L^2(\rd)$ is unitarily equivalent to a {\em diagonal} operator in $\ell^2(\Z_+)$. If $\Omega$ is just radially symmetric, then $P_q \A^* \Omega \A P_q$ is unitarily equivalent to a {\em tridiagonal} operator acting in $\ell^2(\Z_+)$.\\

       The last proposition in this section concerns the unitary equivalence between the Berezin-Toeplitz operator $P_0 W P_0$ and a certain Weyl pseudo-differential operator ($\Psi$DO). Let us recall the definition of Weyl $\Psi$DOs acting in $L^2(\re)$. Denote by $\Gamma(\rd)$ the set of functions $\psi: \rd \to  {\mathbb C}$ such that
       $$
       \|\psi\|_{\Gamma(\rd)} : = \sup_{(y,\eta) \in \rd} \sup_{\ell, m = 0,1} \left|\frac{\partial^{\ell + m}\psi(y,\eta)}{\partial y^\ell \partial \eta^m}\right| < \infty.
       $$
       Then the operator ${\rm Op}^{\rm w}(\psi)$ defined initially as a mapping between the Schwartz class ${\mathcal S}(\re)$ and its dual class ${\mathcal S}'(\re)$ by
       $$
       \left({\rm Op}^{\rm w}(\psi)u\right)(y) = \frac{1}{2\pi} \int_\re \int_\re \psi\left(\frac{y+y'}{2},\eta\right)e^{i(y-y')\eta} u(y') dy'd\eta, \quad y \in \re,
       $$
       extends uniquely to an operator bounded in $L^2(\re)$. Moreover, there exists a constant $c$ such that
       \bel{56a}
       \|{\rm Op}^{\rm w}(\psi)\| \leq c \|\psi\|_{\Gamma(\rd)}
       \ee
       (see e.g. \cite[Corollary 2.5(i)]{abd}). \\
       {\em Remark}: Inequalities  of type \eqref{56a} are known as {\em Calder\'on-Vaillancourt} estimates. \\

    Put
    \bel{ggg}
    {\mathcal R}_b : = -b^{-1/2} \left(
    \begin{array} {cc}
    0 & 1\\
    1 & 0
    \end{array}
    \right),
    \ee
       and for $V : \rd \to {\mathbb C}$, define
      $$
       V_b(x) : = V\left({\mathcal R}_b x\right), \quad x \in \rd, \quad b>0.
       $$
       Moreover, set
       ${\mathcal G}(x) : = \frac{e^{-|x|^2}}{\pi}$, $x \in \rd$.

       \begin{pr} \label{pr4} {\em \cite[Theorem 2.11, Corollary 2.8]{prvb}} Let $V \in L^1(\rd) + L^\infty(\rd)$. Then the operator $P_0 V P_0$ with domain $P_0 L^2(\rd)$ is unitarily equivalent to the operator ${\rm Op}^{\rm w}(V_b * {\mathcal G})$.
       \end{pr}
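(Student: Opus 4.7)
The plan is to realize $P_0 L^2(\rd)$ as a magnetic Fock--Bargmann space and transport $P_0 V P_0$ to $L^2(\re)$ via a suitably rescaled inverse Bargmann transform. Since every $u \in P_0 L^2(\rd)$ has the form $u(x) = f(z) e^{-b|x|^2/4}$ with $f$ entire in $z = x_1 + i x_2$, the lowest Landau subspace is isometrically isomorphic to the weighted Bergman space of entire functions square-integrable against $(b/2\pi) e^{-b|z|^2/2}\, d^2z$, and the angular momentum basis $\{\varphi_{0,k}\}_{k\in\Z_+}$ from Proposition~\ref{pr2} corresponds to the standard monomial basis. Composing with the inverse Bargmann transform, rescaled so that the magnetic length $b^{-1/2}$ becomes the normalized semiclassical length $1$, yields a unitary operator $\mathcal{U} : P_0 L^2(\rd) \to L^2(\re)$ that sends $\varphi_{0,k}$ to the $k$-th Hermite function and, more generally, sends the coherent state $\chi_\zeta := K_0(\cdot,\zeta)/\sqrt{K_0(\zeta,\zeta)}$ at $\zeta \in \rd$ to the standard Gaussian coherent state located at the phase-space point $\mathcal{R}_b \zeta$.

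Next, I would write $P_0 V P_0$ via the reproducing property of $P_0$. Starting from the explicit integral kernel
$$
K_0(x,y) = \frac{b}{2\pi}\, e^{-b|x-y|^2/4}\, e^{i b (x_1 y_2 - x_2 y_1)/2},
$$
a direct computation gives the Berezin representation
$$
\langle P_0 V P_0 u, v \rangle = \int_{\rd} V(\zeta)\, \langle u, \chi_\zeta\rangle\, \langle \chi_\zeta, v\rangle\, \frac{b}{2\pi}\, d\zeta.
$$
After conjugation by $\mathcal{U}$ and the change of variables $\zeta \mapsto \mathcal{R}_b \zeta$ (which absorbs the $b/(2\pi)$ Jacobian against the standard Gaussian coherent-state resolution of identity on $L^2(\re)$), this exhibits $\mathcal{U} P_0 V P_0 \mathcal{U}^*$ as the anti-Wick quantization of the rescaled symbol $V_b = V \circ \mathcal{R}_b$ on $L^2(\re)$.

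Finally, I invoke the classical identity ${\rm Op}^{\rm AW}(\sigma) = {\rm Op}^{\rm w}(\sigma * \mathcal{G})$ relating anti-Wick and Weyl quantizations via convolution with the standard Gaussian $\mathcal{G}$; this yields $\mathcal{U} P_0 V P_0 \mathcal{U}^* = {\rm Op}^{\rm w}(V_b * \mathcal{G})$, as claimed. The main obstacle is the bookkeeping for the coordinate change $\mathcal{R}_b$, whose specific off-diagonal form with scaling $-b^{-1/2}$ encodes the symplectic identification of the configuration plane $\rd$, equipped with the magnetic form $b\, dx_1 \wedge dx_2$, with the phase space $T^*\re$ of unit symplectic normalization. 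For $V \in L^1(\rd) + L^\infty(\rd)$ one must justify the Berezin integral and extend the formula by a density argument, noting that $V_b * \mathcal{G}$ has all derivatives bounded, so $\|V_b * \mathcal{G}\|_{\Gamma(\rd)} < \infty$ and the Calder\'on--Vaillancourt estimate \eqref{56a} confirms that the resulting Weyl $\Psi$DO is indeed bounded on $L^2(\re)$, consistent with the a priori boundedness of $P_0 V P_0$.
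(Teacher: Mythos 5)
The paper itself offers no proof of this proposition---it is quoted verbatim from \cite[Theorem 2.11, Corollary 2.8]{prvb}---and your sketch (identification of $P_0L^2(\rd)$ with a Fock--Bargmann space, the coherent-state/Berezin representation of $P_0VP_0$, and the anti-Wick-to-Weyl Gaussian smoothing, with Calder\'on--Vaillancourt and a density argument handling $V\in L^1(\rd)+L^\infty(\rd)$) reproduces essentially the argument of that reference, so the approach is the right one. The one piece of bookkeeping to correct is the direction of the coordinate change: matching the lowest-Landau-level resolution of identity $\frac{b}{2\pi}\,d\zeta$ with the anti-Wick resolution $\frac{1}{2\pi}\,dw$ forces $\zeta=\mathcal{R}_b w$, i.e.\ your unitary must send $\chi_\zeta$ to the Gaussian coherent state at $\mathcal{R}_b^{-1}\zeta=b\,\mathcal{R}_b\zeta$ rather than at $\mathcal{R}_b\zeta$ (and, in this gauge, the phase of $K_0$ is $e^{-ib(x_1y_2-x_2y_1)/2}$); these are exactly the conventions you flagged and they do not affect the method.
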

       {\em Remark}: The operator ${\rm Op}^{\rm aw}(\psi)$ : = ${\rm Op}^{\rm w}(\psi * {\mathcal G})$ is called $\Psi$DO with {\em anti-Wick symbol} $\psi$
       (see e.g. \cite[Section 24]{shu}).

\section{Proofs of Theorem \ref{th1} and Theorem \ref{th2}}
\label{ss34} \setcounter{equation}{0}
In this section we complete the proofs of Theorem \ref{th1} and Theorem \ref{th2}, concerning perturbations of compact support, and of exponential decay.\\
Let $T = T^*$ be a compact operator in a Hilbert space, such that ${\rm rank}\,\one_{(0,\infty)}(T) = \infty$. Denote by $\left\{\nu_k(T)\right\}_{k = 0}^\infty$ the non-increasing sequence of the positive eigenvalues of $T$,  counted with the multiplicities. \\
Recall that $m_<(x) \leq m_>(x)$ are the eigenvalues of the matrix $m(x)$, $x \in \rd$. Since the matrix $U$ (see \eqref{58}) is unitarily equivalent to $m$, $m_{\gtrless}$ are also the eigenvalues of $U$. Next, we check that Proposition \ref{pr1} implies the following
\begin{follow} \label{f1}
Under the general assumptions of the article, there exist constants $0 < c_<^\pm \leq c_>^\pm < \infty$ and $k_0 \in \Z_+$ such that
\bel{62}
c_<^\pm \nu_{k+k_0}(P_q \A^* m_< \A P_q) \leq \pm(\lambda_{k,q}^\pm - \Lambda_q) \leq c_>^\pm \nu_{k-k_0}(P_q \A^* m_> \A P_q)
\ee
for sufficiently large $k \in \N$.
\end{follow}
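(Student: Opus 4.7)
The plan is to deduce \eqref{62} from Proposition \ref{pr1} by sandwiching the operator $T_\pm := P_q W P_q \mp P_q W H_\pm^{-1} W P_q$ between multiples of $P_q \A^* m_{\gtrless} \A P_q$, and then specializing $\lambda = \pm(\lambda_{k,q}^\pm - \Lambda_q)$. The sandwich of $P_q W P_q$ itself is immediate from \eqref{57}: since $U = \mathcal O^* m \mathcal O$ with $\mathcal O$ unitary, the Hermitian matrix $U(x)$ has eigenvalues $m_<(x), m_>(x)$, so $m_<(x)\,I \leq U(x) \leq m_>(x)\,I$ pointwise, and inserting $\A u$ (well defined for $u \in P_q L^2(\rd) \subset \mathrm{Dom}(\A)$) into the quadratic form gives
\[
\tfrac{1}{2}\,P_q \A^* m_< \A P_q \;\leq\; P_q W P_q \;\leq\; \tfrac{1}{2}\,P_q \A^* m_> \A P_q.
\]

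The core step is the algebraic identity
\[
T_\pm \;=\; P_q W^{1/2}(I \pm \tilde V)^{-1} W^{1/2} P_q, \qquad \tilde V := W^{1/2} H_0^{-1} W^{1/2},
\]
understood as a sesquilinear form on $P_q L^2(\rd)$. To prove it, set $S := W^{1/2} H_\pm^{-1} W^{1/2}$ and apply the resolvent identity $H_\pm^{-1} = H_0^{-1} \mp H_0^{-1} W H_\pm^{-1}$: sandwiching with $W^{1/2}$ on both sides yields $S = \tilde V \mp \tilde V S$, whence $S = \tilde V(I \pm \tilde V)^{-1}$ and $I \mp S = (I \pm \tilde V)^{-1}$. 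Furthermore, the matrix inequality $U \leq \sup|m| \cdot I$ gives $W \leq \sup|m| \cdot H_0$, and hence $\|\tilde V\| \leq \sup|m|$; in the ``$-$'' case the assumption $\sup|m| < 1$ makes $(I - \tilde V)^{-1}$ bounded, while in the ``$+$'' case $(I + \tilde V)^{-1} \leq I$ automatically. Either way, one has $c_1 I \leq (I \pm \tilde V)^{-1} \leq c_2 I$ for some $0 < c_1 \leq c_2 < \infty$, and these bounds transfer through the quadratic form $u \mapsto \langle(I \pm \tilde V)^{-1} W^{1/2} u, W^{1/2} u\rangle$ to give $c_1 P_q W P_q \leq T_\pm \leq c_2 P_q W P_q$. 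Chaining with the preceding sandwich and applying the min-max principle to the compact positive operators involved yields
\[
\tilde c_<^\pm\,\nu_k(P_q \A^* m_< \A P_q) \;\leq\; \nu_k(T_\pm) \;\leq\; \tilde c_>^\pm\,\nu_k(P_q \A^* m_> \A P_q).
\]

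It then remains to translate Proposition \ref{pr1} into eigenvalue bounds. Evaluating it at $\lambda_k := \pm(\lambda_{k,q}^\pm - \Lambda_q)$, noting that ${\mathcal N}_q^\pm(\lambda_k) = k + O(1)$ by the enumeration of the $\lambda_{k,q}^\pm$, and using the tautology $n_+(s; T) \leq N \Longleftrightarrow \nu_N(T) \leq s$, one obtains $\nu_{k+k_0}(T_\pm) \leq (1+\varepsilon)\lambda_k$ and $\nu_{k-k_0}(T_\pm) \geq (1-\varepsilon)\lambda_k$ for a fixed $k_0 \in \Z_+$ absorbing the $O(1)$ terms. Combined with the sandwich above, this produces \eqref{62} with $c_<^\pm := (1+\varepsilon)^{-1} \tilde c_<^\pm$ and $c_>^\pm := (1-\varepsilon)^{-1} \tilde c_>^\pm$ for all sufficiently large $k \geq k_0$. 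The main technical obstacle is managing the unbounded operator $W^{1/2}$: the identity for $T_\pm$ must be interpreted at the level of sesquilinear forms on $P_q L^2(\rd) \subset \mathrm{Dom}(H_0) \subset \mathrm{Dom}(W^{1/2})$, with all rearrangements first verified on a dense core and then extended by continuity using the boundedness of $(I \pm \tilde V)^{-1}$ and of $W^{1/2} P_q$.
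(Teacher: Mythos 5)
Your argument is correct and follows essentially the same route as the paper: the paper controls the second-order term directly via $0 \le P_q W H_\pm^{-1} W P_q \le c_\pm P_q W P_q$ with $c_\pm = \|H_\pm^{-1/2} W H_\pm^{-1/2}\|$ (so $c_+ < 1$), which is exactly what your resolvent resummation $T_\pm = P_q W^{1/2}(I \pm \tilde V)^{-1} W^{1/2} P_q$ delivers (with the same constants, since $1-c_+=(1+\|\tilde V\|)^{-1}$ and $1+c_-=(1-\|\tilde V\|)^{-1}$), after which both proofs use \eqref{57}, the pointwise bounds $m_< I \le U \le m_> I$, the mini-max principle, and absorb the $O(1)$ errors of Proposition \ref{pr1} into the index shift $k_0$. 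One cosmetic caveat: the claim ${\mathcal N}_q^\pm(\lambda_k)=k+O(1)$ is not guaranteed from below when eigenvalue multiplicities are large, so for the upper bound in \eqref{62} you should instead use ${\mathcal N}_q^\pm(\lambda)\ge k+1$ for $\lambda<\lambda_k$ and let $\lambda\uparrow\lambda_k$; the conclusion is unchanged.
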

\begin{proof}
It is easy to see that
\bel{62a}
0 \leq P_q W H_\pm^{-1} W P_q \leq c_\pm P_q W P_q
\ee
with
$$
c_\pm: = \|H_{\pm}^{-1/2} W H_{\pm}^{-1/2}\| \leq \sup_{x \in \rd} |m(x)(I \pm m(x))^{-1}|.
$$
Note that $0 \leq c_- < \infty$ and $0 \leq c_+ < 1$. Moreover, by \eqref{57} and the mini-max principle,
    \bel{62b}
    n_+(2s; P_q \A^* m_< \A P_q) \leq n_+(s; P_q W P_q) \leq n_+(2s; P_q \A^* m_> \A P_q), \quad s>0.
    \ee
    Now, \eqref{19}, \eqref{62a}, and \eqref{62b}, imply that for any $\varepsilon \in (0,1)$ we have
    $$
    n_+(2\lambda(1+\varepsilon); P_q \A^* m_< \A P_q) + O(1) \leq
    $$
    $$
    {\mathcal N}_q^-(\lambda) \leq
    $$
    \bel{60}
     n_+(2\lambda(1-\varepsilon); (1+c_-)P_q \A^* m_> \A P_q) + O(1),
     \ee
     $$
    n_+(2\lambda(1+\varepsilon); (1-c_+)P_q \A^* m_< \A P_q) + O(1) \leq
    $$
    $$
    {\mathcal N}_q^+(\lambda) \leq
    $$
    \bel{61}
     n_+(2\lambda(1-\varepsilon); P_q \A^* m_> \A P_q) + O(1),
     \ee
     as $\lambda \downarrow 0$, and estimates \eqref{60} - \eqref{61} yield \eqref{62} with
     $$
     c_<^- = \frac{1}{2(1+\varepsilon)}, \quad c_>^- = \frac{1+c_-}{2(1-\varepsilon)}, \quad c_<^+ = \frac{1-c_+}{2(1+\varepsilon)},
     \quad c_>^+ = \frac{1}{2(1-\varepsilon)},
     $$
     and sufficiently large $k_0 \in \N$.
    \end{proof}
    Let us now complete the proof of Theorem \ref{th1}. Let $\zeta_1 \in C_0^\infty(\rd)$, $\zeta_1 \geq 0$, $\zeta_1 =1$ on ${\rm supp}\,m_>$. Set
    $\zeta_2(x) : = \left(\max_{y\in \rd} m_>(y)\right) \zeta_1(x)$, $x \in \rd$. Evidently, $m_> \leq \zeta_2$ on $\rd$, so that
    \bel{64}
    \nu_k(P_q \A^* m_> \A P_q) \leq  \nu_k(P_q \A^* \zeta_2 \A P_q), \quad k \in \Z_+.
    \ee
    Further, by Proposition \ref{pr2}, the operator $P_q \A^* \zeta_2 \A P_q$ is unitarily equivalent to the operator $P_0 \zeta_3 P_0$ where
    $$
    \zeta_3 : = 2b\left((q+1) {\rm L}_{q+1}\left(-\frac{\Delta}{2b}\right)  +
       q {\rm L}_{q-1}\left(-\frac{\Delta}{2b}\right)\right) \zeta_2.
       $$
       Therefore,
       \bel{65}
       \nu_k(P_q \A^* \zeta_2 \A P_q) = \nu_k(P_0 \zeta_3 P_0), \quad k \in \Z_+.
       \ee
       Let $R_> > 0$ be so large that the disk $B_{R_>}(0)$ of radius $R_>$, centered at the origin contains the support of $\zeta_3$. Then,
       \bel{66}
       \nu_k(P_0 \zeta_3 P_0) \leq \max_{x \in \rd} |\zeta_3(x)| \, \nu_k(P_0 \one_{B_{R_>}(0)} P_0), \quad k \in \Z_+.
       \ee
       Putting together \eqref{64}, \eqref{65}, and \eqref{66}, we find that there exists a constant $K_> < \infty$ such that
       \bel{67}
       \nu_k(P_q \A^* m_> \A P_q) \leq K_> \nu_k(P_0 \one_{B_{R_>}(0)} P_0), \quad k \in \Z_+.
       \ee
       On the other hand,
       \bel{68}
       \nu_k(P_q \A^* m_< \A P_q) \geq \nu_k(P_q a\, m_< \,a^* P_q).
       \ee
       Applying \eqref{37}, we easily find that the operators $P_q a\, m_< a^*\, P_q$ and $2b(q+1)P_{q+1}\, m_< \, P_{q+1}$ are unitarily equivalent. Hence,
       \bel{69}
       \nu_k(P_q a m_< a^* P_q) = 2b(q+1) \nu_k(P_{q+1} m_<  P_{q+1}), \quad k \in \Z_+.
       \ee
       Further, since $m_<$ is non-negative, continuous,  and does not vanish identically, there exist $c_0 > 0$, $R_< \in (0,\infty)$,
       and $x_0 \in \rd$, such that $m_<(x) \geq c_0 \one_{{B_{R_<}}(x_0)}(x)$, $x\in \rd$. Therefore,
       \bel{70}
       \nu_k(P_{q+1} m_<  P_{q+1}) \geq  c_0 \nu_k(P_{q+1} \one_{{B_{R_<}(x_0)}}  P_{q+1}), \quad k \in \Z_+.
       \ee
       The operators  $P_{q+1} \one_{{B_{R_<}(x_0)}}  P_{q+1}$ and $P_{q+1} \one_{{B_{R_<}(0)}}  P_{q+1}$ are unitarily equivalent under the magnetic translation which maps $x_0$ into $0$ (see e.g. \cite[Eq. (4.21)]{rw}). Therefore,
       \bel{71}
       \nu_k(P_{q+1} \one_{{B_{R_<}(x_0)}}  P_{q+1}) = \nu_k(P_{q+1} \one_{{B_{R_<}(0)}}  P_{q+1}), \quad k \in \Z_+.
       \ee
       Combining \eqref{68} - \eqref{71}, we find that there exists a constant $K_<$ such that
       \bel{72}
       K_< \, \nu_k(P_{q+1} \one_{{B_{R_<}(0)}}  P_{q+1}) \leq \nu_k(P_q \A^* m_< \A P_q), \quad k \in \Z_+.
       \ee
       By \eqref{67} and \eqref{72}, it remains to study the asymptotic behaviour as $k \to \infty$ of $\nu_k(P_m  \one_{{B_{R}(0)}} P_m)$, $m \in \Z_+$ and $R \in (0,\infty)$ being fixed. This asymptotic analysis relies on the representation \eqref{53b}, and results  sufficient for our purposes, are available in the literature.
       Namely, we have
       \begin{lemma} \label{l1}
       {\rm \cite[Section 4, Corollary 2]{chkr}}
        Let $m \in \Z_+$, $R \in (0,\infty)$, $b \in (0,\infty)$. Set $\varrho : = bR^2/2$. Then
        \bel{73}
        \nu_k(P_m  \one_{{B_{R}(0)}} P_m) = \frac{e^{-\varrho} \varrho^{-m+1} k^{2m-1} \varrho^k}{m! \, k!}(1 + o(1)), \quad k \to \infty.
        \ee
        \end{lemma}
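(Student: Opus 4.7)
The plan is to reduce Lemma \ref{l1} to the asymptotic evaluation of explicit integrals, using the diagonalization provided by Proposition \ref{pr3}(ii). Since $\one_{B_R(0)}$ is radially symmetric, the eigenvalues of $P_m \one_{B_R(0)} P_m$ coincide, as a set counted with multiplicities, with the diagonal elements $\langle \one_{B_R(0)} \varphi_{m,k}, \varphi_{m,k}\rangle$, $k \in \Z_+$. Moreover, using that $(a^*)^m$ acts on $\varphi_{0,k}$ via \eqref{37} and that $\varphi_{0,k}$ is explicit, one obtains a closed formula
\[
  |\varphi_{m,k}(x)|^2 \;=\; \frac{b}{2\pi}\,\frac{m!}{k!}\,\Bigl(\frac{b|x|^2}{2}\Bigr)^{\!k-m}
  \bigl[L_m^{(k-m)}(b|x|^2/2)\bigr]^2\,e^{-b|x|^2/2}
\]
for $k\ge m$ (the case $k<m$ is analogous but gives a different parameter assignment in the Laguerre polynomial). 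After the change of variable $t = b|x|^2/2$, $\varrho = bR^2/2$, this turns the diagonal element into
\[
  \nu_{m,k} \;:=\; \langle \one_{B_R(0)} \varphi_{m,k}, \varphi_{m,k}\rangle
  \;=\; \frac{m!}{k!}\int_0^{\varrho} t^{k-m}\bigl[L_m^{(k-m)}(t)\bigr]^2 e^{-t}\,dt.
\]

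Next I would analyze this integral asymptotically as $k\to\infty$ with $m$ and $\varrho$ fixed. Since $\varrho$ is strictly less than the maximum of $t^{k-m}e^{-t}$ on $(0,\infty)$ (which lies at $t=k-m$), the integrand is monotone increasing on $[0,\varrho]$ for large $k$, and the integral is dominated by a boundary layer at $t=\varrho$. For the Laguerre factor, the explicit sum \eqref{39} gives $L_m^{(k-m)}(t) = \binom{k}{m} + O(k^{m-1})$ uniformly on $[0,\varrho]$, so $[L_m^{(k-m)}(t)]^2 = \binom{k}{m}^2(1+o(1))$. For the remaining integral, integration by parts (or the standard asymptotic expansion of the lower incomplete gamma function $\gamma(k-m+1,\varrho)$) yields
\[
  \int_0^\varrho t^{k-m} e^{-t}\,dt \;=\; \frac{\varrho^{k-m+1} e^{-\varrho}}{k-m+1-\varrho}\bigl(1+o(1)\bigr)
  \;=\; \frac{\varrho^{k-m+1} e^{-\varrho}}{k}\bigl(1+o(1)\bigr).
\]
Combining these two asymptotics with $\binom{k}{m}^2 = (k^m/m!)^2 (1+o(1))$ gives exactly
\[
  \nu_{m,k} \;=\; \frac{e^{-\varrho}\,\varrho^{-m+1}\,k^{2m-1}\,\varrho^k}{m!\,k!}\bigl(1+o(1)\bigr),
\]
which is the claimed formula.

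Finally, I have to match the values $\nu_{m,k}$ with the ordered eigenvalues $\nu_k(P_m\one_{B_R(0)}P_m)$. Proposition \ref{pr3}(ii) only asserts equality as a set with multiplicities, so I would verify that the sequence $\{\nu_{m,k}\}_{k\ge m}$ is eventually strictly decreasing in $k$ (which is immediate from the leading asymptotic, whose log is $k\ln\varrho - \ln k! + (2m-1)\ln k + O(1)$, clearly decreasing for large $k$ since $\varrho$ is fixed), and that the finitely many exceptional values with $k<m$ do not affect the ordering in the tail. The main obstacle is the uniform control of the Laguerre factor together with the boundary-layer asymptotics of the incomplete gamma integral, both of which must be tracked with enough precision to capture the algebraic prefactor $k^{2m-1}$ rather than merely the exponential scale $\varrho^k/k!$. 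This is exactly where the explicit structure of the angular momentum basis is essential, and where the reference \cite{chkr} carries out the analysis in detail.
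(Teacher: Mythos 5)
Your proposal is correct. Note that the paper does not prove Lemma \ref{l1} at all: it simply quotes it from \cite[Section 4, Corollary 2]{chkr}, so your argument is a self-contained reconstruction of what is essentially the standard (and the cited) proof: diagonalization of $P_m \one_{B_R(0)} P_m$ in the angular momentum basis via Proposition \ref{pr3}(ii), the explicit Laguerre-polynomial form of $|\varphi_{m,k}|^2$, the uniform expansion $L_m^{(k-m)}(t)=\binom{k}{m}(1+O(1/k))$ on $[0,\varrho]$, and the boundary-layer asymptotics of the incomplete gamma integral; your prefactors combine correctly to \eqref{73}. You also correctly identify and resolve the only delicate point, namely that the decreasing rearrangement eventually coincides term-by-term with the diagonal sequence (since the ratio of consecutive diagonal terms tends to $0$, a shift in index would change the asymptotics, so eventual strict monotonicity is genuinely needed and your argument supplies it).
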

        Now, asymptotic relation \eqref{1} follows from \eqref{62}, \eqref{67}, \eqref{72}, \eqref{73}, and the elementary fact that
        $\ln{k!} = k\ln{k} + O(k)$ as $k \to \infty$.\\

        In the remaining part of this section we prove Theorem \ref{th2} concerning perturbations $m$ of exponential decay. Assume that $m$ satisfies \eqref{3}.
        Then there exist $\delta_{\gtrless} \in \re$, $\delta_< \leq \delta_>$, and $r>1$, such that
        \bel{73a}
        |x|^{\delta_<} e^{-\gamma|x|^{2\beta}} \one_{\rd\setminus B_r(0)}(x) \leq m_<(x) \leq
        \ee
$$
         m_>(x) \leq |x|^{\delta_>} e^{-\gamma|x|^{2\beta}} \one_{\rd\setminus B_r(0)}(x) + \max_{y \in \rd} m_>(y) \one_{B_r(0)}(x), \quad x \in  \rd.
$$
        Let $\eta_{\gtrless,0} \in C^{\infty}(\rd; [0,1])$ be two radially symmetric functions such that $\eta_{<,0} = 1$ on $\rd \setminus B_{r+1}(0)$,
        $\eta_{<,0} = 0$ on $B_r(0)$, and $\eta_{>,0} = 1$ on $\rd \setminus B_r(0)$,
        $\eta_{>,0} = 0$ on $B_{r-1}(0)$. For $x \in \rd$ set
        $$
        \eta_{<,1}(x) : = |x|^{\delta_<} e^{-\gamma|x|^{2\beta}} \eta_{<,0}(x),
        $$
        $$
        \eta_{>,1}(x) : = |x|^{\delta_>} e^{-\gamma|x|^{2\beta}} \eta_{>,0}(x)  + \max_{y \in \rd} m_>(y) (1-\eta_{<,0}(x)).  $$
        Evidently, $\eta_{\gtrless,1} \in C_{\rm b}^{\infty}(\rd)$, and by \eqref{73a},
        $$
         \eta_{<,1}(x) \leq m_<(x), \quad m_>(x) \leq \eta_{>,1}(x), \quad x \in \rd.
         $$
         Therefore, for $k \in \Z_+$, we have
\begin{equation}\label{o14}
\begin{gathered}
         \nu_k(P_q \A^* m_< \A P_q) \geq \nu_k(P_q \A^* \eta_{<,1} \A P_q),\\
         \nu_k(P_q \A^* m_> \A P_q) \leq \nu_k(P_q \A^* \eta_{>,1} \A P_q).
\end{gathered}
\end{equation}
         Further, set
         $$
         \eta_{\gtrless,2}: = 2b\left((q+1) {\rm L}_{q+1}\left(-\frac{\Delta}{2b}\right) + q {\rm L}_{q-1}\left(-\frac{\Delta}{2b}\right)\right)\eta_{\gtrless,1}.
         $$
         According to Proposition \ref{pr2}, the operators $P_q \A^* \eta_{\gtrless,1} \A P_q$, $q \in \Z_+$, and
         $P_0 \eta_{\gtrless,2} P_0$ are unitarily equivalent. Therefore,
         \bel{o16}
         \nu_k(P_q \A^* \eta_{\gtrless,1} \A P_q) = \nu_k(P_0 \eta_{\gtrless,2} P_0), \quad k \in \Z_+.
         \ee
         Next, a tedious but straightforward calculation shows that
         \bel{o15}
         \eta_{\gtrless,2}(x) = \eta_{\gtrless,3}(x)(1+ o(1)),  \quad |x| \to \infty,
         \ee
         where
         $$
        \eta_{\gtrless,3}(x) : = C_{q,\beta}|x|^{\delta_\gtrless} e^{-\gamma|x|^{2\beta}}
\begin{cases}
        1 \quad {\rm if} \ \beta \in (0,1/2],\\
         |x|^{2(q+1)(2\beta-1)}  \quad {\rm if} \ \beta \in (1/2, \infty),
\end{cases}
         x \in \rd\setminus\{0\},
        $$
        and $C_{q,\beta}>0$ are some constants. Even though the exact values of $C_{q,\beta}$ will not play any role in the sequel, we indicate here these values for the sake of the completeness of the exposition:
        $$
        C_{q,\beta} =
\begin{cases}
        2\Lambda_q \quad {\rm if} \quad \beta \in (0,1/2),\\
        2b\left((q+1) {\rm L}_{q+1}\left(-\frac{(2\beta\gamma)^2}{2b}\right) + q {\rm L}_{q-1}\left(-\frac{(2\beta\gamma)^2}{2b}\right)\right)
        \quad {\rm if} \quad \beta = 1/2,\vspace{3pt}\\
        \frac{(2\beta\gamma)^{2(q+1)}}{(2b)^q q!} \quad {\rm if} \quad \beta \in (1/2,\infty).
\end{cases}
    $$
    Hence, by \eqref{o15}, there exists $R \in (0,\infty)$ such that for $x \in \rd$ we have
    \bel{o43}
    \eta_{<,2} \geq \frac{1}{2}  \eta_{<,3} \one_{\rd\setminus B_R(0)} - c_< \one_{B_R(0)} = : \eta_{<,4}(x),
    \ee
    \bel{o44}
    \eta_{>,2} \leq \frac{3}{2}  \eta_{>,3} \one_{\rd\setminus B_R(0)} + c_> \one_{B_R(0)} = : \eta_{>,4}(x),
    \ee
    with $c_\gtrless : = \max_{y \in \rd} |\eta_{\gtrless,2}(y)|$. Thus, for any admissible $k \in \Z_+$ we have
    \bel{o17}
    \nu_k(P_0 \eta_{<,2} P_0) \geq \nu_k(P_0 \eta_{<,4} P_0), \quad \nu_k(P_0 \eta_{>,2} P_0) \leq \nu_k(P_0 \eta_{>,4} P_0).
    \ee
    In order to complete the proof of Theorem \ref{th2}, we need a couple of auxiliary results. For $\beta > 0$, $\mu > 0$, and $\varrho > 0$, set
    \bel{02}
    {\mathcal J}_{\beta, \mu}(k) : = \int_0^{\infty} e^{-\mu t^{\beta} -t} t^k dt, \quad  {\mathcal E}_{\varrho}(k) : = \int_0^{\varrho} e^{-t} t^k dt,
    \quad k > -1,
    \ee
    and for $\delta \in \re$, $c_0>0$ and $c_1 \in \re$, put
\begin{multline*}
    {\mathcal L}(k) = {\mathcal L}_{\beta,\mu,\varrho,\delta}(k;c_0,c_1) : =
    \frac{c_0 {\mathcal J}_{\beta, \mu}(k+\delta) + c_1 {\mathcal E}_{\varrho}(k-\delta_-)}{\Gamma(k+1)},\\ k > \max\{-1,-\delta -1\},
\end{multline*}
    where $\delta_- : = \max\{0,-\delta\}$.
    \begin{lemma} \label{l2}
    Let $\beta > 0$, $\mu > 0$,  $\varrho > 0$, $c_0>0$, and $\delta \in \re$, $c_1 \in \re$. \\
    {\rm(i)} The asymptotic relations
    \bel{o5}
    \ln{{\mathcal L}(k)} =
\begin{cases}
    -\sum_{1 \leq j < \frac{1}{1-\beta}} f_j k^{(\beta-1)j+1} + O(\ln{k}) \quad {\rm if} \quad \beta \in (0,1), \\
    -\left(\ln{(1+\mu)}\right)\,k + O(\ln{k}) \quad {\rm if} \quad \beta = 1, \vspace{2pt}\\
    -\frac{\beta-1}{\beta} k\ln{k} +  k\left(\frac{\beta-1-\ln{(\mu \beta)}}{\beta}\right)\vspace{3pt}\\\mkern30mu-\sum_{1 \leq j < \frac{\beta}{\beta-1}} g_j k^{(\frac{1}{\beta}-1)j+1} + O(\ln{k}) \quad {\rm if} \quad \beta \in (1,\infty),
\end{cases}
     \ee
    hold true as $k\to \infty$, the coefficients $f_j$ and $g_j$ being introduced in the statement of Theorem \ref{th2}.\\
    {\rm(ii)}  We have ${\mathcal L}'(k) < 0$ for sufficiently large $k$.
    \end{lemma}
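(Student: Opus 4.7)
The plan is to attack (i) by Laplace's (saddle-point) method applied to $\mathcal J_{\beta,\mu}(k+\delta)$, after first discarding the contribution of $\mathcal E_\varrho(k-\delta_-)$ as negligible. The truncated gamma integral is bounded by $\mathcal E_\varrho(k-\delta_-) \leq \varrho^{k-\delta_- +1}/(k-\delta_-+1)$, so Stirling's formula gives $\mathcal E_\varrho(k-\delta_-)/\Gamma(k+1) = O(e^{-k\ln k + O(k)})$, which is super-exponentially smaller than $\mathcal J_{\beta,\mu}(k+\delta)/\Gamma(k+1)$ in every regime of $\beta>0$. Consequently $\mathcal L(k) = c_0 \mathcal J_{\beta,\mu}(k+\delta)/\Gamma(k+1) + r(k)$, where $r(k)$ is dominated by the main term beyond any $O(\ln k)$ correction, and I may focus on the main term.

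For the Laplace analysis I treat the three ranges of $\beta$ via different rescalings, chosen to surface the critical points of \eqref{o1} and \eqref{o9}. If $\beta \in (0,1)$, the substitution $t = (k+\delta)s$ yields
\[
\mathcal J_{\beta,\mu}(k+\delta) = (k+\delta)^{k+\delta+1} \int_0^\infty e^{-(k+\delta) F(s;\epsilon)}\,ds, \qquad \epsilon := (k+\delta)^{\beta-1} \to 0,
\]
with $F$ as in \eqref{o1}. If $\beta > 1$, the appropriate substitution is $t = (k+\delta)^{1/\beta} s$, giving
\[
\mathcal J_{\beta,\mu}(k+\delta) = (k+\delta)^{(k+\delta+1)/\beta} \int_0^\infty e^{-(k+\delta) G(s;\epsilon)}\,ds, \qquad \epsilon := (k+\delta)^{(1-\beta)/\beta} \to 0,
\]
with $G$ as in \eqref{o9}. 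In both cases the critical point $s_{\lessgtr}(\epsilon)$ is non-degenerate and depends real-analytically on $\epsilon$ near $0$ by the implicit function theorem applied to the defining equations, so the standard Laplace expansion gives
\[
\int_0^\infty e^{-(k+\delta)\Phi(s;\epsilon)}\,ds = \sqrt{\frac{2\pi}{(k+\delta)\,\Phi''(s_*;\epsilon)}}\, e^{-(k+\delta)\Phi(s_*;\epsilon)}(1+O(1/k))
\]
for $\Phi=F$ or $\Phi=G$. The case $\beta = 1$ is explicit: $\mathcal J_{1,\mu}(k+\delta) = \Gamma(k+\delta+1)(1+\mu)^{-(k+\delta+1)}$.

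Combining with Stirling, $\ln \Gamma(k+1) = k\ln k - k + \tfrac{1}{2}\ln(2\pi k) + O(1/k)$, and Taylor-expanding $f(\epsilon)$ and $g(\epsilon)$ about $\epsilon = 0$ presents $\ln \mathcal L(k)$ modulo $O(\ln k)$ as a finite sum of powers of $k$; only those with strictly positive exponent escape the error, yielding exactly \eqref{o5} (with $f_1 = f'(0) = \mu s_<(0)^\beta = \mu$ by the envelope theorem). For part (ii) I would start from
\[
\frac{\mathcal L'(k)}{\mathcal L(k)} = \frac{c_0 \mathcal J'_{\beta,\mu}(k+\delta) + c_1 \mathcal E'_\varrho(k-\delta_-)}{c_0 \mathcal J_{\beta,\mu}(k+\delta) + c_1 \mathcal E_\varrho(k-\delta_-)} - \psi(k+1),
\]
where $\psi$ is the digamma function, $\psi(k+1) = \ln k + O(1/k)$, and the primed integrals are obtained by inserting a factor $\ln t$ in the integrand. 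Laplace-localizing that ratio gives $\mathcal J'_{\beta,\mu}(k+\delta)/\mathcal J_{\beta,\mu}(k+\delta) = \ln t_\star + O(1/k)$ with $t_\star$ determined by $\mu\beta t^\beta + t = k+\delta$. A direct computation then yields $\mathcal J'/\mathcal J - \psi(k+1)$ equal to $-\mu\beta k^{\beta-1}(1+o(1))$ for $\beta \in (0,1)$, to $-\ln(1+\mu)+O(1/k)$ for $\beta=1$, and to $-\tfrac{\beta-1}{\beta}\ln k\,(1+o(1))$ for $\beta>1$: each is strictly negative and dominates the $\mathcal E$-contribution (which is at most $O(\ln k)\cdot \mathcal E_\varrho/\mathcal J_{\beta,\mu}$, exponentially small), so $\mathcal L'(k)<0$ for all sufficiently large $k$.

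The main obstacle is to maintain uniform control over Laplace error terms as $\epsilon = \epsilon(k)$ tends to $0$ (only slowly when $\beta$ is close to $1$), and thereby to verify that all non-Gaussian corrections in the saddle-point expansion are absorbed into the $O(\ln k)$ remainder in (i); in particular one must check that the tail $|s - s_*(\epsilon)| > \eta$ contributes a factor of order $e^{-c k}$ for some $c>0$ uniform in $\epsilon$. A secondary subtlety for (ii) is that termwise differentiation of the expansion in (i) is not a priori legitimate; relying instead on the direct integral identity for $\mathcal L'/\mathcal L$ and Laplace-localizing the numerator keeps the error bounds compatible.
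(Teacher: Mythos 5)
Your proposal is correct and follows essentially the same route as the paper: the same rescalings $t\mapsto ks$ (resp.\ $t\mapsto k^{1/\beta}s$), Laplace's method at the nondegenerate, analytically varying saddle points of $F$ and $G$, Stirling's formula, the explicit computation at $\beta=1$, and the observation that the truncated integral ${\mathcal E}_\varrho/\Gamma(k+1)$ is negligible in every regime. For part (ii), your logarithmic-derivative formulation with $\ln t_\star-\psi(k+1)$ is just a transparent repackaging of the paper's direct differentiation of ${\mathcal L}$, resting on the same Laplace localization, and the sign conclusions you state agree with the paper's $\Psi(k)<0$.
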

    \begin{proof}
    Let at first  $\delta = 0$. Assume $\beta \in (0,1)$, $k>0$, and
    change the variable $t \mapsto ks$ in the first integral in \eqref{02}. Thus we find that
    \bel{o6}
    {\mathcal J}_{\beta, \mu}(k) = k^{k+1} \int_0^{\infty} e^{-kF(s;k^{\beta-1})} ds.
    \ee
    The function $F(s;k^{\beta-1})$ defined in \eqref{o1}, attains its unique minimum at $s_<(k^{\beta-1})$, and we have $\frac{\partial^2 F}{\partial s^2}(s_<(k^{\beta-1});k^{\beta-1}) = 1 + o(1)$, $k \to \infty$. Therefore, applying a standard argument close to the usual Laplace method for asymptotic evaluation of integrals depending on a large parameter, we easily find that
    \bel{o7}
    \int_0^{\infty} e^{-kF(s;k^{\beta-1})} ds = (2\pi)^{1/2} e^{-kF(s_<(k^{\beta-1});k^{\beta-1})} k^{-1/2}(1+o(1)), \quad k \to \infty.
    \ee
    Bearing in mind that $F(s_<(k^{\beta-1});k^{\beta-1}) = f(k^{\beta-1})$ (see \eqref{o4}), $f(0) = 1$, and
    \bel{o3}
    \ln{\Gamma(k+1)} = k\ln{k} -k + \frac{1}{2} \ln{k} + O(1), \quad k \to \infty,
    \ee
   (see e.g. \cite[Eq. 6.1.40]{abst}), we find that \eqref{o6} -- \eqref{o7} imply
    \begin{align}
    \ln{\left(\frac{{\mathcal J}_{\beta, \mu}(k)}{\Gamma(k+1)}\right)} & = k - kf(k^{\beta -1}) + O(\ln{k})\nonumber\\ &=
    k-k\sum_{0 \leq j < \frac{1}{1-\beta}} \frac{1}{j!} \frac{d^j f}{d\epsilon^j}(0) k^{(\beta-1)j}+ O(\ln{k})\nonumber\\ &=
    -\sum_{1 \leq j < \frac{1}{1-\beta}} \frac{1}{j!} \frac{d^jf}{d\epsilon^j}(0) k^{(\beta-1)j+1}+ O(\ln{k})\nonumber\\ &=
    -\sum_{1 \leq j < \frac{1}{1-\beta}} f_j k^{(\beta-1)j + 1}+ O(\ln{k}), \quad k \to \infty.
    \label{o7a}
    \end{align}
    In the case $\beta = 1$, we simply have
    $$
    \frac{{\mathcal J}_{\beta, \mu}(k)}{\Gamma(k+1)} = \frac{1}{\Gamma(k+1)}\int_0^\infty e^{-(\mu+1)t} t^k dt = (\mu + 1)^{-k-1},
    $$
    i.e.
    \bel{o7b}
    \ln{\left( \frac{{\mathcal J}_{\beta, \mu}(k)}{\Gamma(k+1)}\right)} = -(\ln{(1+\mu)}) k + O(1), \quad k \to \infty.
    \ee
    Let now $\beta \in (1,\infty)$. Changing the variable $t \mapsto k^{1/\beta}s$ with $k>0$ in  \eqref{02},
      we find
    \bel{o11}
    {\mathcal J}_{\beta, \mu}(k) : = k^{(k+1)/\beta} \int_0^{\infty} e^{-kG(s;k^{(\frac{1}{\beta}-1)})} ds.
    \ee
    The function $G(s;k^{\frac{1}{\beta}-1})$ defined in \eqref{o9}, attains its unique minimum at $s_>(k^{\frac{1}{\beta}-1})$, and we have
    $$
    \frac{\partial^2 G}{\partial s^2}(s_>(k^{\frac{1}{\beta}-1}),k^{\frac{1}{\beta}-1}) = \beta (\mu \beta)^{2/\beta}(1 + o(1)), \quad k \to \infty.
     $$
     Arguing as in the derivation of \eqref{o7}, we obtain
    \bel{o12}
    \int_0^{\infty} e^{-kG(s;k^{\frac{1}{\beta}-1})} ds = \sqrt{2\pi \beta} \, (\mu \beta)^{-1/\beta}  e^{-kG(s_>(k^{\frac{1}{\beta}-1});k^{\frac{1}{\beta}-1})} k^{-1/2}(1+o(1)), \quad k \to \infty.
    \ee
    Bearing in mind that $G(s_>(k^{\frac{1}{\beta}-1});k^{\frac{1}{\beta}-1}) = g(k^{\frac{1}{\beta}-1})$ (see \eqref{o10}), and $g(0) = \frac{1+\ln{(\mu \beta)}}{\beta}$, we find that  \eqref{o11}, \eqref{o12}, and \eqref{o3}, imply
    \begin{align}
    \ln{\left(\frac{{\mathcal J}_{\beta, \mu}(k)}{\Gamma(k+1)}\right)} & = -\frac{\beta-1}{\beta} k\ln{k} + k - kg(k^{\frac{1}{\beta}-1}) + O(\ln{k})\nonumber\\ &\mkern-70mu=
   -\frac{\beta-1}{\beta} k\ln{k} +  k-k\sum_{0 \leq j < \frac{\beta}{\beta-1}} \frac{1}{j!} \frac{d^j g}{d\epsilon^j}(0) k^{(\frac{1}{\beta}-1)j}+ O(\ln{k})\nonumber\\ &\mkern-70mu=
   -\frac{\beta-1}{\beta} k\ln{k} +  k(1-g(0))-\sum_{1 \leq j < \frac{\beta}{\beta-1}} \frac{1}{j!} \frac{d^j g}{d\epsilon^j}(0) k^{(\frac{1}{\beta}-1)j+1}+ O(\ln{k})\nonumber\\ &\mkern-70mu=
   -\frac{\beta-1}{\beta} k\ln{k} +  k\left(\frac{\beta-1-\ln{(\mu \beta)}}{\beta}\right)-\sum_{1 \leq j < \frac{\beta}{\beta-1}} g_j k^{(\frac{1}{\beta}-1)j+1}+ O(\ln{k}),
    \label{o7c}
    \end{align}
    as $k \to \infty$.
    Let us now consider general $\delta \in \re$. By \eqref{o3},
    \bel{o7f}
    \ln{\left(\frac{\Gamma(k+\delta+1)}{\Gamma(k+1)}\right)} = \delta \ln{k} + O(1), \quad k \to \infty.
    \ee
    Putting together \eqref{o7a}, \eqref{o7b}, \eqref{o7c}, and  \eqref{o7f}, we find that
    \bel{o7g}
     \ln{\left(\frac{{\mathcal J}_{\beta, \mu}(k+\delta)}{\Gamma(k+1)}\right)} -  \ln{\left(\frac{{\mathcal J}_{\beta, \mu}(k)}{\Gamma(k+1)}\right)} = O(\ln{k}), \quad k \to \infty.
     \ee
     Finally, by \eqref{73}, we easily find that for each $\delta \in \re$ fixed, we have
     \bel{o7d}
     \frac{{\mathcal E}_\varrho(k-\delta_-)}{\Gamma(k+1)} = o\left(\frac{{\mathcal J}_{\beta, \mu}(k + \delta)}{\Gamma(k+1)}\right), \quad k \to \infty.
     \ee
     The combination of \eqref{o7a}, \eqref{o7b}, \eqref{o7c}, \eqref{o7g}, and \eqref{o7d} implies \eqref{o5}. \\
     (ii) We have
     $$
     {\mathcal L}'(k) =
     $$
     $$
      c_0\left( \frac{{\mathcal J}'_{\beta, \mu}(k+\delta)}{\Gamma(k+1)} - \frac{\Gamma'(k+1)}{\Gamma(k+1)^2} {\mathcal J}_{\beta, \mu}(k+\delta)\right) +
      $$
      \bel{o40}
      c_1\left(\frac{{\mathcal E}'_{\varrho}(k-\delta_-)}{\Gamma(k+1)} - \frac{\Gamma'(k+1)}{\Gamma(k+1)^2} {\mathcal E}_{\varrho}(k-\delta_-)\right),
     \ee
     $$
     {\mathcal J}'_{\beta, \mu}(k) = \int_0^\infty e^{-\mu t^\beta - t} t^k \ln{t}\, dt,
     \quad  {\mathcal E}'_{\varrho}(k) = \int_0^\varrho e^{-t} t^k \ln{t} \, dt,
     $$
     and
     $$
     \frac{\Gamma'(k+1)}{\Gamma(k+1)} = \ln{k} + \frac{1}{2k} + O(k^{-2}), \quad k \to \infty,
     $$
     (see e.g. \cite[Eq. 6.3.18]{abst}). Performing an asymptotic analysis similar to the one in the proof of the first part of the lemma, we find that there exists a function $\Psi = \Psi_{\beta, \mu , \delta}$ such that $\Psi(k) < 0$ for $k$ large enough, and
     \bel{o41}
     \frac{{\mathcal J}'_{\beta, \mu}(k+\delta)}{\Gamma(k+1)} - \frac{\Gamma'(k+1)}{\Gamma(k+1)^2} {\mathcal J}_{\beta, \mu}(k+\delta) = \Psi(k)(1 + o(1)),
     \ee
      \bel{o42}
       \frac{{\mathcal E}'_{\varrho}(k-\delta_-)}{\Gamma(k+1)} - \frac{\Gamma'(k+1)}{\Gamma(k+1)^2} {\mathcal E}_{\varrho}(k-\delta_-) = o(\Psi(k)),
       \ee
       as $k \to \infty$. Putting together \eqref{o40}, \eqref{o41}, and \eqref{o42}, we conclude that ${\mathcal L}'(k) < 0$ for sufficiently large $k$.
     \end{proof}
    Taking into account the definition of the functions $\eta_{\gtrless, 4}$ in \eqref{o43} - \eqref{o44}, the mini-max principle, representation \eqref{53a}, as well as Lemma \ref{l2} (ii), we find that there exist constants $c_{j,\gtrless} > 0$, $j=0,1$, $\tilde{\delta}_\gtrless \in \re$, and $k_0 \in \Z_+$, such that
\begin{equation}\label{o18}
\begin{gathered}
    \nu_k(P_0 \eta_{<,4} P_0) \geq {\mathcal L}_{\beta,\mu,\varrho,\tilde{\delta}_<}(k + k_0; c_{0,<}, -c_{1,<}), \\
    \nu_k(P_0 \eta_{>,4} P_0) \leq {\mathcal L}_{\beta,\mu,\varrho,\tilde{\delta}_>}(k; c_{0,>}, c_{1,>}),
\end{gathered}
\ee
    for $\mu = \gamma(2/b)^{\beta}$, $\varrho = bR^2/2$, and sufficiently large $k \in \Z_+$.\\
     Putting together \eqref{62}, \eqref{o14}, \eqref{o16}, \eqref{o17}, \eqref{o18}, and \eqref{o5}, we obtain \eqref{4} -- \eqref{6}.

%%%%%%%%%%%%%%%%%%%%%%%%%%%%%%%%%%%%%%%%%%%%%%%%%%%%%%%%%%%%%%%%%%%%%%%%%%%%%%%
\section{Proof of Theorem \ref{th3}}
\label{ss35} \setcounter{equation}{0}
%%%%%%%%%%%%%%%%%%%%%%%%%%%%%%%%%%%%%%%%%%%%%%%%%%%%%%%%%%%%%%%%%%%%%%%%%%%%%%%
Estimates \eqref{19} combined with the Weyl inequalities \eqref{wi} and the mini-max principle, entail
$$
n_+(\lambda(1+\varepsilon); P_q W P_q) + O(1) \leq
$$
$$
{\mathcal N}_q^-(\lambda) \leq
$$
\bel{o20a}
n_+(\lambda(1-\varepsilon)^2; P_q W P_q) + n_+(\lambda \varepsilon (1 - \varepsilon); P_q W H_-^{-1} W P_q) + O(1),
\ee
$$
n_+(\lambda(1+\varepsilon)^2; P_q W P_q) - n_+(\lambda \varepsilon (1 + \varepsilon); P_q W H_+^{-1} W P_q) + O(1) \leq
$$
$$
{\mathcal N}_q^+(\lambda) \leq
$$
\bel{o20b}
n_+(\lambda(1-\varepsilon); P_q W P_q)  + O(1),
\ee
as $\lambda \downarrow 0$. It is easy to check that we have
$$
P_q W H_{\pm}^{-1} W P_q \leq C_{1,\pm} P_q \A^* \langle \cdot \rangle^{-2\rho} \A P_q
$$
with
$$
C_{1,\pm} : =  \|H_0^{1/2} H_\pm^{-1/2}\|^2 \left(\sup_{x \in \rd} \langle x \rangle^{\rho}m_>(x)\right)^2.
$$
Therefore, for any $s>0$,
    \bel{o21}
n_+(s;P_q W H_{\pm}^{-1} W P_q) \leq n_+(s; C_{1,\pm} P_q \A^* \langle \cdot \rangle^{-2\rho} \A P_q).
    \ee
    Further, by Proposition \ref{pr2}, the operator $P_q W P_q$ (resp., $P_q \A^* \langle \cdot \rangle^{-2\rho} \A P_q$) is unitarily equivalent to $\frac{1}{2} P_0 w_q\left(U\right) P_0$
(resp., to $P_0 w_q\left(\langle \cdot \rangle^{-2\rho} I\right) P_0$). Hence, for any $s>0$,
    \bel{o22}
    n_+(s; P_q W P_q) = n_+(2s; P_0 w_q\left(U\right) P_0),
    \ee
    \bel{o23}
    n_+(s; P_q \A^* \langle \cdot \rangle^{-2\rho} \A P_q) = n_+(s; P_0 w_q\left(\langle \cdot \rangle^{-2\rho} I\right) P_0) \leq n_+(s; C_2 P_0 \langle \cdot \rangle^{-2\rho} P_0)
    \ee
    with $C_2 : = \sup_{x \in \rd}\langle x \rangle^{2\rho} |w_q\left(\langle x \rangle^{-2\rho} I\right)|$. Now, write
    $$
    \frac{1}{2} w_q\left(U\right) = {\mathcal T}_q + \tilde{\mathcal T}_q,
    $$
    the symbol ${\mathcal T}_q$ being defined in \eqref{o24}, and note the crucial circumstance that $\tilde{\mathcal T}_q \in {\mathcal S}^{-\rho-2}(\rd)$. Then the Weyl inequalities \eqref{wi} entail $$
    n_+(s(1+\varepsilon); P_0 {\mathcal T}_q P_0) - n_-(s\varepsilon; P_0 \tilde{\mathcal T}_q P_0) \leq
    $$
    $$
    n_+(2s; P_0 w_q\left(U\right) P_0) \leq
    $$
    \bel{o30}
    n_+(s(1-\varepsilon); P_0 {\mathcal T}_q P_0) + n_+(s\varepsilon; P_0 \tilde{\mathcal T}_q P_0),
    \ee
    for any $s>0$ and $\varepsilon \in (0,1)$. Evidently,
    \bel{o25}
    n_\pm(s; P_0 \tilde{\mathcal T}_q P_0) \leq n_+(s; C_3 P_0 \langle \cdot \rangle^{-\rho - 2} P_0), \quad s>0,
    \ee
    with $C_3 : = \sup_{x \in \rd} \langle x \rangle^{\rho + 2} |\tilde{\mathcal T}_q(x)|$. Recalling Proposition \ref{pr4}, we find that we have reduced the asymptotic analysis of ${\mathcal N}_q^{\pm}(\lambda)$ as $\lambda \downarrow 0$ to the eigenvalue asymptotics for a $\Psi$DO with elliptic anti-Wick symbol of negative order. The spectral asymptotics for operators of this type has been extensively studied in the literature since the 1970s. In particular, we have the following
    \begin{pr} \label{pr5}
    Let $0 \leq \psi \in {\mathcal S}^{-\rho}(\rd)$, $\rho>0$. Assume that there exists $0 < \psi_0 \in C^\infty({\mathbb S}^1)$ such that
    $ \lim_{|x| \to \infty} |x|^{\rho} \psi(x) = \psi_0(x/|x|)$.
    Then we have
    \bel{o31}
    n_+(\lambda; {\rm Op}^{\rm aw}(\psi)) = (2\pi)^{-1} \Phi_\psi(\lambda)(1+o(1)), \quad  \lambda \downarrow 0,
    \ee
    which is equivalent to
    $$
    \lim_{\lambda \downarrow 0} \lambda^{2/\rho}  n_+(\lambda; {\rm Op}^{\rm aw}(\psi)) = {\mathcal C}(\psi_0) : = \frac{1}{4\pi} \int_0^{2\pi} \psi_0(\cos{\theta}, \sin{\theta})^{2/\rho} d\theta.
    $$
    \end{pr}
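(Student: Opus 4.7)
The plan is to reduce the anti-Wick quantization ${\rm Op}^{\rm aw}(\psi) = {\rm Op}^{\rm w}(\psi*{\mathcal G})$ to the Weyl quantization ${\rm Op}^{\rm w}(\psi)$ modulo a negligible perturbation, and then to invoke the classical Weyl asymptotic for an elliptic one-dimensional $\Psi$DO of negative order. Since the Gaussian ${\mathcal G}$ has unit mass and vanishing odd moments, a Taylor expansion at $x$ gives
$$
(\psi*{\mathcal G})(x) = \psi(x) + \frac{1}{4}(\Delta \psi)(x) + \tilde{r}(x), \quad \tilde{r}\in {\mathcal S}^{-\rho-4}(\rd),
$$
so $\psi*{\mathcal G}-\psi$ belongs to ${\mathcal S}^{-\rho-2}(\rd)$, and ${\rm Op}^{\rm w}(\psi*{\mathcal G}-\psi)$ is compact with eigenvalue counting function of order $\lambda^{-2/(\rho+2)} = o(\lambda^{-2/\rho})$. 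Applying the Weyl inequalities \eqref{wi} to the decomposition ${\rm Op}^{\rm aw}(\psi) = {\rm Op}^{\rm w}(\psi) + {\rm Op}^{\rm w}(\psi*{\mathcal G}-\psi)$, one squeezes $n_+(\lambda;{\rm Op}^{\rm aw}(\psi))$ between $n_+((1\pm\varepsilon)\lambda;{\rm Op}^{\rm w}(\psi))$ up to an additive error of size $o(\lambda^{-2/\rho})$ for any fixed $\varepsilon \in (0,1)$.

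The second stage is the classical eigenvalue asymptotic for a self-adjoint Weyl $\Psi$DO acting in $L^2(\re)$ with symbol in ${\mathcal S}^{-\rho}(\rd)$ which is elliptic at infinity; under the hypothesis $|x|^\rho\psi(x)\to \psi_0(x/|x|)>0$, one has
$$
n_+(\lambda;{\rm Op}^{\rm w}(\psi)) = \frac{1}{2\pi}\Phi_\psi(\lambda)(1+o(1)), \quad \lambda\downarrow 0.
$$
This is a one-dimensional instance of Weyl's law for anisotropically decaying symbols; the form I need is classical and can be traced to the references listed in Remark (ii) after Theorem \ref{th3}. The continuity properties of $\Phi_\psi$ at small $\lambda$ needed to absorb the factor $1\pm\varepsilon$ follow from the $\rho$-homogeneity of the limit symbol, and combining the two stages yields \eqref{o31}.

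The equivalent explicit form of the asymptotic with constant ${\mathcal C}(\psi_0)$ is obtained by a direct polar-coordinate computation of $\Phi_\psi(\lambda)$. Since $\psi(x) \sim r^{-\rho}\psi_0(\cos\theta,\sin\theta)$ as $r=|x|\to\infty$, the set $\{\psi>\lambda\}$ is asymptotically $\{r<(\psi_0(\cos\theta,\sin\theta)/\lambda)^{1/\rho}\}$, and integration in polar coordinates gives
$$
\Phi_\psi(\lambda) = \frac{1}{2\lambda^{2/\rho}}\int_0^{2\pi}\psi_0(\cos\theta,\sin\theta)^{2/\rho}d\theta\,(1+o(1)), \quad \lambda\downarrow 0,
$$
which, combined with the prefactor $(2\pi)^{-1}$, yields the stated value of ${\mathcal C}(\psi_0)$. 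The main obstacle is the second stage: locating or verifying the sharp-leading-constant form $(2\pi)^{-1}\Phi_\psi(\lambda)$ of the Weyl law under the weak hypothesis of ellipticity only at infinity. The remaining steps are routine manipulations with pseudodifferential calculus and elementary asymptotic integration.
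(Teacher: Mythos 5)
Your first stage (anti-Wick versus Weyl) is fine and in fact mirrors what the paper does: by \cite[Theorem 24.1]{shu} the symbol difference is of lower order, and its counting function is $o(\lambda^{-2/\rho})$; the only caveat is that $\psi*{\mathcal G}-\psi$ has no fixed sign, so you need a singular-value (order) estimate for negative-order $\Psi$DOs rather than any monotonicity — acceptable, but worth saying. The genuine gap is exactly the step you flag as the ``main obstacle'': you invoke the sharp Weyl law
$n_+(\lambda;{\rm Op}^{\rm w}(\psi)) = \frac{1}{2\pi}\Phi_\psi(\lambda)(1+o(1))$
directly for a symbol that is only \emph{asymptotically} elliptic, citing the references loosely. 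The paper does not do this, and for a reason: it applies the semiclassical result \eqref{o32} of \cite{daurob} only to regularized symbols $\psi_{\pm,\varepsilon}$ which are exact constant multiples of $|x|^{-\rho}\psi_0(x/|x|)$ outside a ball, so that the hypotheses of the cited theorem are unambiguously met, and then recovers $\psi$ by a sandwich $\psi_{-,\varepsilon}\le\psi\le\psi_{+,\varepsilon}$.

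The subtle point is that this sandwich cannot be performed once you have already passed to the Weyl quantization, because Weyl quantization is not monotone in the symbol: $\psi\le\psi_{+,\varepsilon}$ gives no operator inequality between ${\rm Op}^{\rm w}(\psi)$ and ${\rm Op}^{\rm w}(\psi_{+,\varepsilon})$, and the difference $\psi_{+,\varepsilon}-\psi$ is of the \emph{same} order $-\rho$, so its counting function is of size $\lambda^{-2/\rho}$ and cannot be absorbed via the Weyl inequalities \eqref{wi}. This is precisely why the paper's proof compares first at the anti-Wick level, using the monotonicity of the anti-Wick quantization with respect to the symbol (\cite[Proposition 24.1]{shu}) together with the mini-max principle, as in \eqref{j5}, and only afterwards trades anti-Wick for Weyl — for the nice symbols $\psi_{\pm,\varepsilon}$ — at the cost of lower-order corrections \eqref{o31a}. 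So either you verify that the theorem you want to cite really covers symbols with mere asymptotic ellipticity at infinity (which you have not done), or you must reorganize your argument along the paper's lines; as written, the central step is unsupported, and the most natural way to support it within your own scheme fails. Your polar-coordinate computation of $\Phi_\psi(\lambda)$ and the equivalence with the constant ${\mathcal C}(\psi_0)$ are correct and unproblematic.
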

    \begin{proof}
    Evidently, for each $\varepsilon \in (0,1)$ there exist real functions $\psi_{\pm, \varepsilon} \in C^\infty(\rd)$ such that
    $$
    \psi_{-,\varepsilon}(x) \leq \psi(x) \leq \psi_{+,\varepsilon}(x), \quad x \in \rd,
    $$
    $$
    \psi_{\pm, \varepsilon}(x) = (1 \mp \varepsilon)^{-1} |x|^{-\rho} \psi_0(x/|x|), \quad x \in \rd, \quad |x| \geq R,
    $$
    for some $R\in (0,\infty)$. Applying the monotonicity of the anti-Wick quantization with respect to the symbol (see e.g. \cite[Proposition 24.1]{shu}), the mini-max principle, and the Weyl inequalities, we obtain
    $$
    n_+((1+\varepsilon)\lambda; {\rm Op}^{\rm w}(\psi_{-,\varepsilon})) - n_-(\varepsilon\lambda; ({\rm Op}^{\rm aw}(\psi_{-,\varepsilon})-{\rm Op}^{\rm w}(\psi_{-,\varepsilon}))) \leq
    $$
    $$
    n_+(\lambda; {\rm Op}^{\rm aw}(\psi)) \leq
    $$
    \bel{j5}
    n_+((1-\varepsilon)\lambda; {\rm Op}^{\rm w}(\psi_{+,\varepsilon})) + n_+(\varepsilon \lambda; ({\rm Op}^{\rm aw}(\psi_{+,\varepsilon})-{\rm Op}^{\rm w}(\psi_{+,\varepsilon}))).
    \ee
    By \cite{daurob}, we have the following semiclassical result
     \bel{o32}
    n_+(\lambda; {\rm Op}^{\rm w}(\psi_{\pm, \varepsilon})) = (2\pi)^{-1} \Phi_{\psi_{\pm, \varepsilon}}(\lambda)(1+o(1)), \quad \lambda \downarrow 0.
    \ee
   Further, by \cite[Theorem 24.1]{shu} the differences ${\rm Op}^{\rm aw}(\psi_{\pm, \varepsilon}) - {\rm Op}^{\rm w}(\psi_{\pm, \varepsilon})$ are $\Psi$DOs of lower order than ${\rm Op}^{\rm w}(\psi_{\pm, \varepsilon})$, so that we easily obtain
    \bel{o31a}
    \lim_{\lambda \downarrow 0} \lambda^{2/\rho} n_{\pm}(\varepsilon \lambda; ({\rm Op}^{\rm aw}(\psi_{\pm,\varepsilon})-{\rm Op}^{\rm w}(\psi_{\pm,\varepsilon})) = 0, \quad \varepsilon > 0.
    \ee
    Now, \eqref{j5} -- \eqref{o31a} imply
    $$
    (1 + \varepsilon)^{-4/\rho} {\mathcal C}(\psi_0) \leq
    \liminf_{\lambda \downarrow 0} \lambda^{2/\rho} n_+(\lambda; {\rm Op}^{\rm aw}(\psi)) \leq
    $$
    $$
    \limsup_{\lambda \downarrow 0} \lambda^{2/\rho} n_+(\lambda; {\rm Op}^{\rm aw}(\psi)) \leq
    (1 - \varepsilon)^{-4/\rho} {\mathcal C}(\psi_0),
    $$
    for $\varepsilon \in (0,1)$.
    Letting $\varepsilon \downarrow 0$, we obtain \eqref{o31}.

    \end{proof}
    By Propositions \ref{pr4} and \ref{pr5}, we have
    $$
    n_+(\lambda; P_0 {\mathcal T}_q P_0) = n_+(\lambda; {\rm Op}^{\rm aw} ({\mathcal T}_{q,b})) =
    $$
    \bel{o26}
    \frac{1}{2\pi} \Phi_{{\mathcal T}_{q,b}}(\lambda)(1+o(1)) = \frac{b}{2\pi} \Phi_{{\mathcal T}_q}(\lambda)(1+o(1)), \quad \lambda \downarrow 0,
    \ee
    with  ${\mathcal T}_{q,b} = {\mathcal T}_{q} \circ {\mathcal R}_b$, ${\mathcal R}_b$ being defined in \eqref{ggg}.
    Finally, for $\rho_0 > \rho$, we have
   \bel{o28}
    n_+(\lambda; P_0 \langle \cdot \rangle^{-{\rho_0}} P_0)
    = O(\lambda^{-2/\rho_0}) = o(\Phi_{{\mathcal T}_q}(\lambda)), \quad \lambda \downarrow 0.
    \ee
     Now,   \eqref{10} easily follows from \eqref{o20a} -- \eqref{o31}, \eqref{o26}, and \eqref{o28}. The equivalence of \eqref{11} and \eqref{j1} can be checked by arguing as in the proof of \cite[Proposition 13.1]{shu}.
    \appendix
\section{Compactness of the Resolvent Differences}
\label{app} \setcounter{equation}{0}
{\em A priori}, the operators $H_0$ and $H_\pm$, self-adjoint in $L^2(\rd)$, could be defined as the Friedrichs extensions of the operators $\sum_{j=1,2} \Pi_j^2$ and
$\sum_{j,k=1,2} \Pi_j g_{jk}^{\pm} \Pi_k$ defined on $C_0^{\infty}(\rd)$. Such a definition implies immediately that
$$
{\rm Dom}\,H_0^{1/2} = {\rm Dom}\,H_\pm^{1/2} =
\left\{u \in L^2(\rd) \, | \, \Pi_j u \in L^2(\rd), \; j = 1,2\right\},
$$
and that  the operators $H_\pm^{1/2} H_0^{-1/2}$ and $H_0^{1/2} H_\pm^{-1/2}$ are bounded.
By \cite[Proposition A.2]{gms}, the operators $H_0$ and $H_\pm$ are essentially self-adjoint on $C_0^\infty(\rd)$, and have a common domain
$$
{\rm Dom}\,H_0 = {\rm Dom}\,H_\pm =
\left\{u \in L^2(\rd) \, | \, \Pi_j \Pi_k u \in L^2(\rd), \; j,k = 1,2\right\}.
$$
Let us now prove the compactness of the operator $H_0^{-1} - H_\pm^{-1}$ in $L^2(\rd)$. Since we have
$$
H_0^{-1} - H_\pm^{-1} = \pm H_0^{-1} W H_\pm^{-1} = \pm H_0^{-1} W  H_0^{-1}  H_0 H_\pm^{-1},
$$
it suffices to prove the compactness of  $H_0^{-1} W  H_0^{-1}$. The operators $H_0^{-1} W  H_0^{-1} = \frac{1}{2} H_0^{-1} \A^* U \A  H_0^{-1}$
and $\frac{1}{2} H_0^{-1} \A^* m_> \A  H_0^{-1}$ are bounded, self-adjoint, and positive. Moreover,
\bel{a1}
H_0^{-1} \A^* U \A  H_0^{-1} \leq H_0^{-1} \A^* m_> \A  H_0^{-1}.
    \ee
    On the other hand,
    \bel{a2}
    H_0^{-1} \A^* m_> \A  H_0^{-1} = H_0^{-1} a^* m_> a  H_0^{-1} + H_0^{-1} a m_> a^*  H_0^{-1}.
    \ee
    By \eqref{a1} and \eqref{a2}, it suffices to prove the compactness of the operator $m_>^{1/2} a^* H_0^{-1}$. We have
    $$
    m_>^{1/2} a^* H_0^{-1} = m_>^{1/2} H_0^{-1/2} \left(H_0^{-1/2} a^* + 2b H_0^{-1/2} a^* H_0^{-1}\right).
    $$
    The operator $H_0^{-1/2} a^* + 2b H_0^{-1/2} a^* H_0^{-1}$ is bounded, so that it suffices to prove the compactness of $m_>^{1/2} H_0^{-1/2}$ which follows from  $m_> \in L^{\infty}(\rd)$,  $\lim_{|x| \to \infty} m_>(x) = 0$, and the diamagnetic inequality (see e.g. \cite[Theorem 2.5]{ahs}).   \\

{\bf Acknowledgements}.
The final version of this work has been done during authors' visit to the Isaac Newton Institute, Cambridge, UK, in January 2015. The authors thank the Newton Institute for financial support and hospitality.
The partial
support by the Chilean Scientific Foundation {\em Fondecyt}
under Grant 1130591, by {\em N\'ucleo Milenio de F\'isica Matem\'atica} RC120002, and by the Faculty of Mathematics, PUC, Santiago de Chile, is gratefully acknowledged as well.

\bigskip
{\sc Tom\'as Lungenstrass, Georgi Raikov}\\
 %Departamento de Matem\'aticas,
 Facultad de
Matem\'aticas\\ Pontificia Universidad Cat\'olica de Chile\\
Vicu\~na Mackenna 4860\\ Santiago de Chile\\
E-mails: tlungens@mat.puc.cl, graikov@mat.puc.cl

\end{document}